\documentclass[reqno,onecolumn,oneside]{paper}
\usepackage[english]{babel}
\usepackage{enumerate,amsmath,amsthm,amsfonts,
amssymb,latexsym,mathrsfs,
}
\usepackage[all]{xy}
\usepackage[sort]{cite}

\newtheorem{theorem}{Theorem}[section]
\newtheorem{proposition}[theorem]{Proposition}
\newtheorem{lemma}[theorem]{Lemma}
\newtheorem{corollary}[theorem]{Corollary}
\theoremstyle{definition}
\newtheorem{definition}[theorem]{Definition}
\newtheorem{exm}[theorem]{Example}

\theoremstyle{remark}
\newtheorem{remark}[theorem]{Remark}


\newcommand{\bydef}{\mathrel{\mathop:}=}
\newcommand{\frab}{\operatorname{Free}_{\Gab}}
\newcommand{\End}{\operatorname{End}}

\newcommand{\op}{\operatorname{op}}

\newcommand{\id}{\operatorname{id}}


\newcommand{\rMod}{\mathcal{M}\!\!\:\mathit{od}\textrm{-}}

\newcommand{\lMod}{\textrm{-}\mathcal{M}\!\!\:\mathit{od}}
\newcommand{\Mod}{\mathcal{M}\!\!\:\mathit{od}}
\newcommand{\Id}{\operatorname{Id}}

\newcommand{\lang}{\mathcal{L}}
\newcommand{\fml}{\mathit{Fm}_\mathcal{L}}

\newcommand{\SL}{\mathcal{SL}}
\newcommand{\Q}{\mathcal{Q}_u}

\newcommand{\cat}{\mathcal}

\renewcommand{\wp}{\mathscr{P}}


\renewcommand{\phi}{\varphi}
\newcommand{\e}{\varepsilon}
\newcommand{\g}{\gamma}


\newcommand{\la}{\left\langle}
\newcommand{\ra}{\right\rangle}
\newcommand{\liff}{\Longleftrightarrow}

\newcommand{\under}{\backslash}
\newcommand{\ost}{{}_\ast/}
\newcommand{\lto}{\longrightarrow}

\newcommand{\lmapsto}{\longmapsto}
\newcommand{\ust}{\under_\ast}

\newcommand{\gr}{Grothendieck }
\newcommand{\Gab}{\cat{G}^\text{Ab}}
\renewcommand{\P}{\operatorname{P_F}}

\def\amslatex\slash{{\protect\AmS-\protect\LaTeX}}

\begin{document} 

\title{Quantales and their modules: projective objects, ideals, and congruences}

\author{Ciro Russo} 

\institution{Departamento de Matem\'atica \\ Instituto de de Matem\'atica \\ Universidade Federal da Bahia, Brazil \\ \small{\texttt{ciro.russo@ufba.br}}}

\maketitle

\today

\begin{abstract}
In this paper we characterize the projective modules over an arbitrary quantale, and then we apply such a characterization in order to define the $K_0$ group of a quantale. Then we study congruences of quantales and quantale modules by means of their ideals and of saturated elements w.r.t. a binary relation.
\end{abstract}

{\em Dedicado a Chico Miraglia, pilar da L\'ogica Matem\'atica brasileira.}

\section*{Introduction}

Since their introduction, in connection with the theory of C$^*$-algebras \cite{mulvey}, quantales proved to be extremely useful in various areas of pure and applied mathematics.

Although they are very often studied in connection with non-commutative topology (see, e.g., \cite{conmir,berni,borc1,borc2}), in the last decades quantales are appearing more and more in other areas' literature. For example, Abramsky and Vickers introduced the concepts of observational logic and process semantics, and the algebraic notion of quantale module \cite{abrvic}; later on, quantales and quantale modules were used in the study of algebraic and logical foundations of Quantum Mechanics \cite{abrvic,moore,resende1}. In \cite{russo}, the author presented, as an application, an approach to data compression algorithms by means of quantale module homomorphisms.

For what concerns Mathematical Logic, Yetter \cite{yetter} proved the connection of quantales with Girard's Linear Logic \cite{girard} and, in recent years, a quantale-theoretic approach to propositional deductive systems has been developed \cite{galtsi,russothesis,russo2}, starting from the observation that any propositional deductive system can be represented as a quantale module.

However, despite of their multiple applications, the first systematic studies on the categories of quantale modules are rather recent \cite{russothesis,russo,russo2,solo}. On the other hand, the results presented in \cite{galtsi} and \cite{russo2} clearly suggest that the algebraic categories of quantales, unital quantales, and quantale modules are worth to be further investigated.

Aim of this paper is precisely to continue the study of quantales and their modules following two main guidelines: the classical theory of ring modules, as a sort of set course, and various classical logical problems, as a source of inspiration.

More precisely, we present the following results. First, we shall improve the representation of endomorphisms of free modules, presented in \cite[Theorem 5.19]{russo}, by showing that the sup-lattice isomorphism between $Q^{X \times X}$ and $\End_Q(Q^X)$ of that representation is actually a quantale isomorphism if $Q^{X \times X}$ is equipped with a suitable product (Theorem \ref{matrixendo}). Then, using such a result, we shall characterize projective modules by means of multiplicatively idempotent elements of $Q^{X \times X}$ (Theorem \ref{proj}). We will conclude Section \ref{projsec} by observing that the construction of the $K_0$ group of a semiring, presented in \cite{dnr}, can be plainly applied to quantales, thus promisingly broadening this topic's horizons.

Sections \ref{qidsec} and \ref{qcongsec} aim at answering the question of whether it is possible to represent or, better, to recover quantale and quantale module congruences by using the congruence class of the bottom element. This question naturally leads to the definition of ideals in such structures, and Theorems \ref{icong} and \ref{qicong} describe the relationship between ideals and congruences in quantale modules and quantales respectively. Moreover, it turns out that right, left, and two-sided ideals of a quantale are in one-one correspondence, respectively, with its right-, left-, and two-sided elements. The results obtained imply immediately an interesting consequence, namely, that semisimple integral quantales are precisely the spatial frames (Corollary \ref{intsemisimple}). The section is completed by a description of quantale quotients by means of the so-called saturated elements w.r.t. a binary relation. Such results are more or less known in more restricted or different contexts, such as unital commutative quantales, quantale modules, frames, and locales, but apparently they have never been extended to the general case of arbitrary quantales.

Last, we conclude the paper with some final remarks, and an outline about current related projects.

\section{Preliminaries}

Before introducing quantales we recall that the category $\SL$ of sup-lattices has complete lattices as objects and maps preserving arbitrary joins as morphisms. The bottom element of a sup-lattice shall be denoted by $\bot$ and the top element by $\top$. We also recall that any sup-lattice morphism obviously preserve the bottom element while it may not preserve the top.

\begin{definition}\label{quantale}
A \emph{quantale} is a structure $\la Q, \bigvee, \cdot \ra$ such that
\begin{enumerate}[(Q1)]
\item $\la Q, \bigvee\ra$ is a sup-lattice,
\item $\la Q, \cdot \ra$ is a semigroup,
\item $a \cdot \bigvee B = \bigvee\limits_{b \in B} (a \cdot b)$ \ and \ $\left(\bigvee B\right) \cdot a = \bigvee\limits_{b \in B} (b \cdot a)$ \ for all $a \in Q$, $B \subseteq Q$.
\end{enumerate}
A quantale $Q$ \emph{commutative} if so is the multiplication. $Q$ is said to be \emph{unital} if there exists $1 \in Q$ such that $\la Q, \cdot, 1\ra$ is a monoid. A unital quantale is called \emph{integral} if $1 = \top$.

The morphisms in the category $\cat Q$ of (all) quantales are maps that are simultaneously sup-lattice and semigroup homomorphisms. In the category $\Q$ of unital quantales the morphisms must also preserve the unit; in order to underline this fact we will use the notation $\la Q, \bigvee, \cdot, 1\ra$ for unital quantales.
\end{definition}

\begin{definition}\label{modules}
Let $Q$ be a unital quantale. A (left) \emph{$Q$-module} $M$, or a \emph{module over $Q$}, is a sup-lattice $\la M, \bigvee\ra$ with an external binary operation, called \emph{scalar multiplication},
$$\ast: (a,v) \in Q \times M \lmapsto a \ast v \in M,$$
such that the following conditions hold:
\begin{enumerate}[(M1)]
\item $(a \cdot b) \ast v = a \ast (b \ast v)$, for all $a, b \in Q$ and $v \in M$;
\item the external product is distributive with respect to arbitrary joins in both coordinates, i.e.
\begin{enumerate}[(i)]
\item for all $a \in Q$ and $X \subseteq M$, $a \ast {}^M\bigvee X = {}^M\bigvee_{v \in X} a \ast v,$
\item for all $A \subseteq Q$ and $v \in M$, $\left({}^Q\bigvee A\right) \ast v = {}^M\bigvee_{a \in A} a \ast v,$
\end{enumerate}
\item $1 \ast v = v$.
\end{enumerate}
In the case of non-unital quantales a left module is a sup-lattice with a scalar multiplication satisfying (M1) and (M2). Right modules are defined in the usual way. Moreover, if $R$ is another quantale, a sup-lattice $M$ is a $Q$-$R$-bimodule if it is a left $Q$ module, a right $R$-module, and in addition $(a \ast_Q v) \ast_R a' = a \ast_Q (v \ast_R a')$ for all $a \in Q$, $a' \in R$, and $v \in M$.
\end{definition}

Condition (M2) is equivalent to the following one.
\begin{enumerate}
\item[(M2)]The scalar multiplication is residuated in both arguments (with respect to the lattice order in $M$), i.e. the maps
$$a \ast _-: v \in M \lmapsto a \ast v \in M \quad \textrm{and} \quad _- \ast v: a \in Q \lmapsto a \ast v \in M$$
are residuated for all $a \in Q$ and $v \in M$ respectively.
\end{enumerate}

The proof of the following proposition is straightforward from the definitions of $\ast$, $\ust$ and $\ost$ and from the properties of quantales; however, it can be found in \cite{galtsi}.

\begin{proposition}\label{basicmq}
For any unital quantale $Q$ and any $Q$-module $M$, the following hold.
\begin{enumerate}[(i)]
\item The operation $\ast$ is order-preserving in both coordinates.
\item The operations $\ust$ and $\ost$ preserve meets in the numerator; moreover, they convert joins in the denominator into meets. In particular, thew are both order-preserving in the numerator and order reversing in the denominator.
\item $(v \ost w) \ast w \leq v$.
\item $a \ast (a \ust v) \leq v$.
\item $v \leq a \ust (a \ast v)$.
\item $(a \ust v) \ost w = a \under (v \ost w)$.
\item $((v \ost w) \ast w) \ost w = v \ost w$.
\item $1 \leq v \ost v$.
\item $(v \ost v) \ast v = v$.
\end{enumerate}
\end{proposition}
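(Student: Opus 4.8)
The plan is to reduce every item to the single \emph{triple adjunction}
\begin{equation*}
a \ast w \leq v \quad \liff \quad w \leq a \ust v \quad \liff \quad a \leq v \ost w \tag{$\star$}
\end{equation*}
holding for all $a \in Q$ and $v,w \in M$. Writing out the definitions $a \ust v \bydef \bigvee\{w \in M : a \ast w \leq v\}$ and $v \ost w \bydef \bigvee\{a \in Q : a \ast w \leq v\}$, the first equivalence in $(\star)$ is exactly the residuatedness of $a \ast {}_-$ and the second exactly the residuatedness of ${}_- \ast w$; hence $(\star)$ is nothing but a restatement of (M2). So the first step is simply to record $(\star)$.

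Granting $(\star)$, the remaining items are the formal calculus of Galois connections, in this order. (i) holds because $a \ast {}_-$ and ${}_- \ast v$ preserve joins by (M2), hence are monotone (lower adjoints). (ii): the upper adjoints $a \ust {}_-$ and ${}_- \ost w$ preserve all meets in their numerator argument; and since $a \ast {}_-$ and ${}_- \ast w$ send joins to joins, a short chase of $(\star)$ turns a join in the denominator of $\ust$ or $\ost$ into a meet, which also yields the stated monotonicity/antitonicity. (iii) and (iv) are the counits of $(\star)$ (take $a \bydef v \ost w$, resp.\ $w \bydef a \ust v$); (v) is a unit (take $w \bydef v$ against target $a \ast v$). (vii) is the identity $g f g = g$ for the adjunction ${}_- \ast w \dashv {}_- \ost w$; alternatively ``$\leq$'' comes from (iii) and (ii) and ``$\geq$'' from a unit. (viii) uses (M3): $1 \ast v = v \leq v$, so $1 \leq v \ost v$ by $(\star)$; and (ix) then follows because $(v \ost v) \ast v \leq v$ by (iii), while $v = 1 \ast v \leq (v \ost v) \ast v$ by (viii), (i) and (M3).

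The one item coupling the module operations with something outside is (vi), which I would flag as the only place needing a little care with directions. For every $b \in Q$,
\begin{align*}
b \leq (a \ust v) \ost w
&\;\liff\; b \ast w \leq a \ust v
\;\liff\; a \ast (b \ast w) \leq v \\
&\;\liff\; (a \cdot b) \ast w \leq v
\;\liff\; a \cdot b \leq v \ost w
\;\liff\; b \leq a \under (v \ost w),
\end{align*}
the middle steps using $(\star)$ and the associativity axiom (M1), and the last step the residuation of the product in the quantale $Q$; as $b$ is arbitrary, the two sides coincide. Apart from keeping track of which of the two module residuals and which direction of each adjunction is in play at each step, there is no genuine obstacle: the proposition is routine, as the text already notes.
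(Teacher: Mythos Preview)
Your proposal is correct and is exactly the kind of routine verification the paper has in mind: the text does not give its own proof but states that the proposition ``is straightforward from the definitions of $\ast$, $\ust$ and $\ost$ and from the properties of quantales'' and refers to \cite{galtsi}. Your reduction of every item to the triple adjunction $(\star)$, together with (M1) for item (vi) and (M3) for items (viii)--(ix), is precisely that straightforward argument spelled out in full.
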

Note that some of the above inequalities are in $M$ and some are in $Q$ but we used the same symbol for both. This will happen often throughout the paper since we can rely on the context telling the two relations apart.

\begin{remark}\label{notation}
Henceforth, in all the definitions and results that can be stated both for left and right modules, we will refer generically to ``modules''~--- without specifying left or right~--- and we will use the notations of left modules.
\end{remark}

Given two $Q$-modules $M$ and $N$, and a map $f: M \lto N$, $f$ is a $Q$-module homomorphism if it is a sup-lattice homomorphism that preserves the scalar multiplication. For any quantale $Q$ we shall denote by $Q$-$\Mod$ and $\Mod$-$Q$ respectively the categories of left $Q$-modules and right $Q$-modules with the corresponding homomorphisms. Moreover, if $R$ is another quantale $Q$-$\Mod$-$R$ shall denote the category whose objects are $Q$-$R$-bimodules and morphisms are maps which are simultaneously left $Q$-module morphisms and right $R$-module morphisms.

For the basic properties of quantales and their modules we refer the reader respectively to \cite{krpa,rosenthal,mulvey,mulnaw} and to \cite{krpa,russothesis,russo,solo}. In particular, we recall the following well-known facts, whose proofs can all be found in \cite[Sections 1 and 2]{krpa}.
\begin{proposition}\label{facts}
The following statements hold.
\begin{enumerate}[(a)]
\item Any quantale $Q$ can be embedded into a unital quantale $Q[e]$.
\item For any set $X$, the free quantale (respectively: unital quantale) over $X$ is the powerset of the free semigroup (resp.: free monoid) over $X$, equipped with the singleton map, with set-theoretic union as join and the product defined by $Y \cdot Z \bydef \{yz \mid y \in Y, z \in Z\}$.
\item For any unital quantale $Q$ and for any set $X$, the free (left) $Q$-module over $X$ is the function module $Q^X$, equipped with the map
$$x \in X \lmapsto e_x \in Q^X, \quad \textrm{with } e_x(y)= \left\{\begin{array}{ll} 1 & \textrm{if } y = x \\ \bot & \textrm{if } y \neq x \end{array}\right.,$$
with pointwise join and scalar multiplication.
\item For any non-unital quantale $Q$, the free (left) $Q$-module over $X$ is precisely the free (left) module over $Q[e]$.
\item The categories $\cat Q$, $\Q$, $Q$-$\Mod$ and $\Mod$-$Q$ (for any $Q$ in $\cat Q$ or $\Q$) are algebraic categories.
\end{enumerate}
\end{proposition}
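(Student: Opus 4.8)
The plan is to handle the five items in turn; since all are standard I only indicate the constructions and the points to verify, using freely that $\bot\cdot a=a\cdot\bot=\bot$ in any quantale and $\bot\ast v=\bot$ in any module (the case $B=\emptyset$ of (Q3)). For (a), I would put $Q[e]:=Q\times\mathbf{2}$ with the product sup-lattice structure, $\mathbf{2}=\{\bot,\top\}$, identify $a\in Q$ with $(a,\bot)$, and set $e:=(\bot,\top)$, so that each element is uniquely $a$ or $a\vee e$. Requiring that $e$ be a unit and that $\cdot$ distribute over joins then forces the multiplication: $(a,i)\cdot(b,j)$ has second coordinate $i\wedge j$ and first coordinate $a\cdot b$, additionally joined with $a$ when $j=\top$ and with $b$ when $i=\top$. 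It then remains to verify that this product is associative and distributes over arbitrary joins in each argument---a routine case split on the $\mathbf{2}$-coordinates---after which $a\mapsto(a,\bot)$ is visibly an injective homomorphism of non-unital quantales.

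For (b), I would take $X^{+}$ (resp.\ $X^{*}$) to be the set of nonempty words (resp.\ all words, including the empty word $\varepsilon$) over $X$ under concatenation, and check that $\mathcal{P}(X^{+})$ with union as join and $Y\cdot Z=\{yz\mid y\in Y,\ z\in Z\}$ is a quantale, and that $\mathcal{P}(X^{*})$ is a unital quantale with unit $\{\varepsilon\}$; associativity and join-distributivity of $\cdot$ are immediate. For the universal property, given a quantale $R$ and $f\colon X\to R$, extend $f$ to words by $f(x_{1}\cdots x_{n})=f(x_{1})\cdots f(x_{n})$ (and $f(\varepsilon)=1$) and set $\bar f(Y)=\bigvee_{w\in Y}f(w)$; preservation of joins is clear, preservation of $\cdot$ and of $1$ comes from distributivity in $R$, and uniqueness holds because $Y=\bigcup_{w\in Y}\{w\}$ and $\{x_{1}\cdots x_{n}\}=\{x_{1}\}\cdots\{x_{n}\}$. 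Part (c) is parallel: $Q^{X}$ with pointwise join and pointwise action $(a\ast g)(x)=a\cdot g(x)$ is a $Q$-module, with (M1)--(M3) holding coordinatewise; for $f\colon X\to M$ one sets $\bar f(g)=\bigvee_{x\in X}g(x)\ast f(x)$, checks it is a module homomorphism with $\bar f(e_{x})=f(x)$ (using (M1)--(M3) and $\bot\ast v=\bot$), and obtains uniqueness from the identity $g=\bigvee_{x\in X}g(x)\ast e_{x}$ in $Q^{X}$. Then (d) follows once one notes, using the description $Q[e]=Q\times\mathbf{2}$ from (a), that $\mathbf{2}$ acts in exactly one way on any sup-lattice ($\top\ast v=v$ and $\bot\ast v=\bot$): a $Q$-module structure on a sup-lattice $M$ extends uniquely to a $Q[e]$-module structure by $(a,i)\ast v:=(a\ast v)\vee(i\ast v)$, with restriction of scalars along $Q\hookrightarrow Q[e]$ as its inverse, so $Q$-$\Mod$ and $Q[e]$-$\Mod$ are isomorphic via the identity on underlying sup-lattices; hence the free $Q$-module on $X$ coincides with the free $Q[e]$-module on $X$, which is $Q[e]^{X}$ by (c).

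For (e), I would argue by monadicity. By (b) and (c), the forgetful functors from $\cat Q$, $\Q$, $Q$-$\Mod$ and $\Mod$-$Q$ to $\mathbf{Set}$ have left adjoints; each creates small limits, reflects isomorphisms (the set-map inverse of a bijective homomorphism again preserves joins and the remaining operations), and creates coequalizers of the parallel pairs that become split coequalizers in $\mathbf{Set}$, so by Beck's monadicity theorem each of these categories is monadic over $\mathbf{Set}$, that is, algebraic. Equivalently, $\SL$ is the Eilenberg--Moore category of the covariant power-set monad on $\mathbf{Set}$, and the binary product, the unit $1$, and the $Q$-indexed external operations merely add structure and equations on top of this, so each category is the category of algebras of an (infinitary) equational theory.

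The genuinely delicate points, I expect, are two. First, in (a): writing the twisted multiplication down correctly and then patiently discharging associativity and join-distributivity, where a slip in the case analysis on the $\mathbf{2}$-coordinates is the real hazard. Second, in (e): being precise about what ``algebraic category'' should mean here, since arbitrary joins require a proper class of operation symbols---so the clean route is monadicity over $\mathbf{Set}$ via Beck's theorem rather than finitary universal algebra, applied with the correct hypotheses.
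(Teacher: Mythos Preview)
Your proof sketch is correct. Note, however, that the paper does not actually give its own proof of this proposition: it simply records these as ``well-known facts, whose proofs can all be found in \cite[Sections~1 and~2]{krpa}'', and then, for the sake of completeness, recalls only the explicit construction of $Q[e]$ for part~(a). Your construction of $Q[e]$ as $Q\times\mathbf{2}$ with the twisted product agrees exactly with the one the paper reproduces (elements $a\vee\varepsilon$ with $\varepsilon\in\{\bot,e\}$ and the four-case multiplication table), so on the one item the paper does spell out, you are on the same track. For the remaining items your arguments are the standard ones and would serve as a self-contained substitute for the citation; in particular, your handling of~(d) via the isomorphism of categories $Q\text{-}\Mod\cong Q[e]\text{-}\Mod$ and of~(e) via Beck monadicity is a clean way to make precise what the reference presumably contains.
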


For the sake of completeness, we briefly recall the construction presented in \cite[Lemma 1.1.11]{krpa}, which proves the (a) of Proposition \ref{facts}. Given a quantale $Q$, let $Q[e]$ be defined as follows:
\begin{itemize}
\item[-]$e \notin Q$;
\item[-]$Q[e] = \{a \vee \e \mid a \in Q, \e \in \{\bot, e\}\}$;
\item[-]for any family $\{a_x \vee \e_x\}_{x \in X}$ of elements of $Q[e]$;
$$\bigvee_{x \in X} (a_x \vee \e_x) \bydef \left\{\begin{array}{ll} \left({}^Q\bigvee_{x \in X} a_x\right) \vee e & \textrm{if } \exists x \ \e_x = e \\
{}^Q\bigvee_{x \in X} a_x & \textrm{otherwise}\end{array}\right.;$$
\item[-]  for all $a \vee \e, a' \vee \e' \in Q[e]$,
$$(a \vee \e) \cdot (a' \vee \e') \bydef \left\{\begin{array}{ll}
a \cdot_Q a' & \textrm{if } \e = \e' = \bot \\
a \cdot_Q a' \vee_Q a' & \textrm{if } \e = e \textrm{ and } \e' = \bot \\
a \cdot_Q a' \vee_Q a & \textrm{if } \e = \bot \textrm{ and } \e' = e \\
(a \cdot_Q a' \vee_Q a \vee_Q a') \vee e & \textrm{if } \e = \e' = e \\
\end{array}\right..$$
\end{itemize}
Then the embedding of $Q$ into $Q[e]$ is $\iota_e: a \in Q \lmapsto a \vee \bot \in Q[e]$.

It is well-known that sup-lattice homomorphisms are precisely the residuated maps. So, given two $Q$-modules $M$ and $N$, a map $f: M \lto N$ is a homomorphism if and only if it is a residuated map that preserves the scalar multiplication. In addition, let us recall that for any sup-lattice $\la M, \bigvee\ra$ the structure $M^{\op} \bydef \la M, \bigwedge\ra$ is again a sup-lattice (w.r.t. to the dual order $\geq$) and, if $M$ is a left (respectively: right) $Q$-module with scalar multiplication $\ast$, then $M^{\op}$ is a right (resp.: left) $Q$-module with scalar multiplication $\ust$. Such a correspondence, together with the properties of residuated maps and operations, immediately gives the following relevant property (see also \cite[Section 5]{abrvic}).
\begin{proposition}\label{dualiso}
For any quantale $Q$ the categories $Q\lMod$ and $\rMod Q$ are dually isomorphic.
\end{proposition}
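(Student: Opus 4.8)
The plan is to turn the order-dualization $M \mapsto M^{\op}$, recalled just before the statement, into a contravariant functor and then show it is invertible; the whole argument rests on the fact that sup-lattice morphisms are exactly the residuated maps.

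On objects, send a left $Q$-module $M$ to the right $Q$-module $M^{\op} = \la M, \bigwedge \ra$ with scalar multiplication $\ust$, as already recalled. On morphisms, send a left $Q$-module homomorphism $f : M \lto N$ to its residual $f_\ast : N \lto M$. Two things must be verified. First, $f_\ast$ is a sup-lattice homomorphism $N^{\op} \lto M^{\op}$: a residual preserves arbitrary meets, and meets in $N$ and in $M$ are precisely the joins of $N^{\op}$ and of $M^{\op}$. Second, $f_\ast$ preserves the scalar multiplication $\ust$, that is $f_\ast(a \ust v) = a \ust f_\ast(v)$ for all $a \in Q$ and $v \in N$ (the residual on the left taken in $N$, that on the right in $M$). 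This is a short Galois-connection chase: for every $x \in M$,
\begin{align*}
x \le f_\ast(a \ust v) &\liff f(x) \le a \ust v \liff a \ast f(x) \le v \\
&\liff f(a \ast x) \le v \liff a \ast x \le f_\ast(v) \liff x \le a \ust f_\ast(v),
\end{align*}
where the central equivalence uses that $f$ preserves $\ast$ and the remaining ones are the adjunctions defining $f_\ast$ and the residuals $\ust$ in $N$ and in $M$.

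Since $(\id_M)_\ast = \id_M$ and $(g \circ f)_\ast = f_\ast \circ g_\ast$ for composable $f,g$, these assignments constitute a contravariant functor $F : Q\lMod \lto \rMod Q$; by the symmetric construction (see Remark \ref{notation}) the same recipe, applied to right modules, yields a contravariant functor $G : \rMod Q \lto Q\lMod$. It then remains to check that $G \circ F = \Id_{Q\lMod}$ and $F \circ G = \Id_{\rMod Q}$. On objects, $(M^{\op})^{\op} = M$ as complete lattices, and the left scalar action recovered on $(M^{\op})^{\op}$ is again $a \ast -$: it is the residual, computed in $M^{\op}$, of the map $w \mapsto a \ust w$ (which preserves joins in $M^{\op}$ by Proposition \ref{basicmq}(ii)), and that residual equals $a \ast -$ because $a \ast v \le w \liff v \le a \ust w$. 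On morphisms, $(f_\ast)_\ast = f$ by the analogous biduality of residuated maps. Hence $F$ is an isomorphism of categories from $Q\lMod$ onto $(\rMod Q)^{\op}$, i.e. $Q\lMod$ and $\rMod Q$ are dually isomorphic.

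There is no deep obstacle here: the one point that repays care is the bookkeeping of the several residuals involved~--- which map is residuated, in which direction, and in which opposite lattice~--- together with the verification that iterating the residual construction twice is the identity. Conceptually the statement is just the assertion that residuation is an order-dual self-equivalence, transported along the module structure by means of the scalar-preservation identity displayed above.
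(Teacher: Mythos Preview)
Your argument is correct and is precisely the approach the paper has in mind: the text does not give a proof but merely asserts that the object-level correspondence $M \mapsto M^{\op}$ (with action $\ust$) ``together with the properties of residuated maps and operations'' yields the duality, and you have simply spelled out those details by sending $f$ to $f_\ast$ and checking functoriality and invertibility. There is nothing to add or to compare.
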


Let $M$ and $N$ be $Q$-modules. The set $\hom_Q(M,N)$ of the $Q$-module morphisms from $M$ to $N$ is naturally equipped with a sup-lattice structure defined pointwise. Moreover, if $Q$ is commutative, $\hom_Q(M,N)$ becomes a $Q$-module with the scalar multiplication $\bullet$ defined, for all $a \in Q$ and $h \in \hom_Q(M,N)$, by setting $(a \bullet h)(x) = a \ast h(v) = h(a \ast v)$, for all $v \in M$.

\begin{remark}\label{unit}
In the present paper we are mainly intersted in unital quantales and their modules. However, some of the results that we present here either hold for all quantales or can be suitably reformulated and proved in the general setting by means of (a), (b) and (d) of Proposition \ref{facts}.

In order to keep notations as light as possible, in the rest of the paper we shall always deal with unital quantales and their modules without explicitly repeating it all the times. At the end of each section we shall discuss the extensions of the results presented to all quantales, whenever needed.
\end{remark}

\section{Projective quantale modules and $K_0$ group of a quantale}
\label{projsec}

In what follows, for any subset $S$ of $Q^X$, we shall denote by $Q \cdot S$ the submodule of $Q^X$ generated by $S$.

Let $Q \in \Q$ and $X, Y$ be non-empty sets and let us consider the free $Q$-modules $Q^X$ and $Q^Y$. We recall from \cite{russo} that, for any $k \in Q^{X \times Y}$, the \emph{$\Q$-module transform} $h_k: Q^X \lto Q^Y$ with \emph{kernel} $k$ is the defined by
\begin{equation}\label{qwtransformeq}
h_k f(y) = \bigvee_{x \in X} f(x) \cdot k(x,y), \quad \textrm{for all } y \in Y.
\end{equation}
Its \emph{inverse transform} $\lambda_k: Q^Y \lto Q^X$ is defined by
\begin{equation}\label{qwinverse transformeq}
\lambda_k g(x) = \bigwedge_{y \in Y} g(y)/k(x,y), \quad \textrm{for all } x \in X.
\end{equation}

\begin{remark}\label{righttrans}
Recalling that we are using the notations of left modules, we observe that, if we consider $Q^X$ and $Q^Y$ as right modules, the direct and inverse transforms are defined respectively by $h_k f(y) = \bigvee_{x \in X} k(x,y) \cdot f(x)$ and $\lambda_k g(x) = \bigwedge_{y \in Y} k(x,y) \under g(y)$.

Up to a suitable reformulation, all the results we will present hold for both left and right modules.
\end{remark}

\begin{theorem}\cite[Theorem 5.7]{russo}\label{wadjointpair}
Let $Q \in \Q$, $X, Y$ be two non-empty sets and $k \in Q^{X \times Y}$. The following hold:
\begin{enumerate}[(i)]
\item $(h_k,\lambda_k)$ is an adjoint pair, i.e. $h_k$ is residuated and $\lambda_k = h_{k*}$;
\item $h_k \in \hom_Q\left(Q^X,Q^Y\right)$;
\item $\lambda_k \circ h_k$ is a nucleus over $Q^X$.
\end{enumerate} 
\end{theorem}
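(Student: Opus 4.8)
The plan is to verify the three items directly from the defining formulas \eqref{qwtransformeq} and \eqref{qwinverse transformeq}, using only the quantale axioms and the pointwise module structure on $Q^X$ and $Q^Y$. The central observation is that both $h_k$ and $\lambda_k$ are built ``coordinatewise'' out of the multiplication and the residuation of $Q$, so the verification reduces to known facts about residuated maps and about the interaction of $\cdot$ with arbitrary joins.

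For (i), I would first show that $h_k$ preserves arbitrary joins: given a family $\{f_i\}_{i \in I} \subseteq Q^X$, we have $h_k\!\left(\bigvee_i f_i\right)(y) = \bigvee_{x \in X}\left(\bigvee_i f_i(x)\right) \cdot k(x,y)$, and by axiom (Q3) (right distributivity of $\cdot$ over joins) together with associativity/commutativity of joins in $Q$ this equals $\bigvee_i \bigvee_{x} f_i(x) \cdot k(x,y) = \bigvee_i h_k(f_i)(y)$; since joins in $Q^Y$ are pointwise, $h_k$ is a sup-lattice morphism, hence residuated. Its residual (right adjoint) $h_{k*}$ is characterized by $h_{k*}(g) = \bigvee\{f : h_k(f) \le g\}$; I would then check that the explicit formula for $\lambda_k$ satisfies the adjunction $h_k(f) \le g \iff f \le \lambda_k(g)$, which unwinds to: for all $y$, $\bigvee_x f(x)\cdot k(x,y) \le g(y)$, iff for all $x,y$, $f(x)\cdot k(x,y) \le g(y)$, iff for all $x,y$, $f(x) \le g(y)/k(x,y)$, iff for all $x$, $f(x) \le \bigwedge_y g(y)/k(x,y) = \lambda_k(g)(x)$. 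Here I use the defining property of the residual $-/k(x,y)$ in the quantale $Q$. By uniqueness of adjoints this gives $\lambda_k = h_{k*}$.

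For (ii), it remains to check that $h_k$ preserves the scalar multiplication, i.e. $h_k(a \ast f) = a \ast h_k(f)$ for $a \in Q$, $f \in Q^X$. Since scalar multiplication on the function modules is pointwise, $h_k(a \ast f)(y) = \bigvee_x (a \cdot f(x)) \cdot k(x,y)$; by associativity of $\cdot$ in $Q$ and axiom (Q3) (pulling $a$ out of the join on the left) this equals $a \cdot \bigvee_x f(x) \cdot k(x,y) = a \ast h_k(f)(y)$. Combined with (i), $h_k \in \hom_Q(Q^X, Q^Y)$. (The right-module variant of Remark \ref{righttrans} is handled symmetrically.)

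For (iii), I would invoke the general principle that if $(h, h_*)$ is an adjoint pair between posets with $h \dashv$... more precisely, for a residuated map $h : P \to P'$ with residual $h_*$, the composite $h_* \circ h : P \to P$ is always a closure operator (inflationary: $\id_P \le h_* h$ by the unit of the adjunction; monotone: as a composite of monotone maps; idempotent: $h_* h h_* h = h_* h$ since $h h_* h = h$). Applied to $h = h_k$ and $P = Q^X$, $\lambda_k \circ h_k = h_{k*} \circ h_k$ is a closure operator on $Q^X$. To upgrade ``closure operator'' to ``nucleus'' I must additionally show the multiplicativity/stability condition appropriate to $Q$-modules, namely $a \ast (\lambda_k h_k)(f) \le (\lambda_k h_k)(a \ast f)$ for all $a \in Q$; this follows because $h_k$ preserves scalar multiplication (part (ii)) and $\lambda_k = h_{k*}$ is ``$Q$-equivariant'' in the lax sense $a \ast h_{k*}(g) \le h_{k*}(a \ast g)$, which is the adjoint-transpose of the equivariance of $h_k$. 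The main obstacle I anticipate is precisely this last point: getting the direction of the inequality right for the nucleus condition from equivariance of $h_k$ alone, since $h_{k*}$ need not preserve scalar multiplication on the nose — only laxly — so one must argue via the adjunction rather than by a direct computation, and one must be careful that the relevant notion of ``nucleus over $Q^X$'' from \cite{russo} is exactly a closure operator satisfying this lax equivariance.
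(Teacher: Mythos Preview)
Your argument is correct in all three parts; the adjunction check in (i) via the pointwise residuation, the scalar-multiplication preservation in (ii), and the derivation of the lax equivariance $a \ast h_{k*}(g) \le h_{k*}(a \ast g)$ from the strict equivariance of $h_k$ (by transposing $h_k(a \ast h_{k*}(g)) = a \ast h_k h_{k*}(g) \le a \ast g$) are exactly the standard verifications, and your caution about the direction of the inequality in (iii) is well placed but, as you note, resolves cleanly.

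There is nothing to compare against here: the paper does not prove this theorem. It is quoted verbatim from \cite[Theorem~5.7]{russo} and used as a black box for the subsequent results (Theorems~\ref{matrixendo} and~\ref{proj}). So your write-up is not a different route to the paper's proof; it is simply a self-contained proof of a result the paper imports.
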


Let us now consider the case of endomorphisms of a given free $Q$-module. The set $\End_Q(Q^X)$ has a natural structure of quantale with the pointwise join, the product of endomorphisms defined as the composition in the reverse order\footnote{Actually also the composition can be used. Here the other multiplication is needed in order to establish Theorem \ref{matrixendo}} ($h_1 h_2 \bydef h_2 \circ h_1$), and the identity of $Q^X$ as unit. Furthermore, we set $M_{X}(Q)$ to be the structure $\la Q^{X \times X}, \bigvee, \star, \id \ra$, where
\begin{itemize}
\item $\id$ is the map defined by $\id(x,y) = \left\{\begin{array}{ll} 1 & \textrm{if } x=y \\ \bot & \textrm{otherwise} \end{array}\right.$,
\item $\bigvee$ is the pointwise join,
\item the operation $\star$ is defined by $(h \star k)(x,y) = \bigvee_{z \in X} h(x,z) k(z,y)$.
\end{itemize}
It is immediate to verify that $M_{X}(Q)$ is a quantale; moreover the following result holds.

\begin{theorem}\label{matrixendo}
For any quantale $Q$ and any non-empty set $X$ the quantales $M_X(Q)$ and $\End_Q(Q^X)$ are isomorphic.
\end{theorem}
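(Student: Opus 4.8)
The plan is to exhibit an explicit map between $M_X(Q)$ and $\End_Q(Q^X)$ and check it is a quantale isomorphism. The natural candidate is the assignment $k \mapsto h_k$, where $h_k$ is the $\Q$-module transform of (\ref{qwtransformeq}) with $Y = X$; by Theorem \ref{wadjointpair}(ii) each $h_k$ is indeed a $Q$-module endomorphism of $Q^X$, so $\Phi\colon k \in Q^{X\times X} \mapsto h_k \in \End_Q(Q^X)$ is well-defined. First I would establish that $\Phi$ is a bijection: given $h \in \End_Q(Q^X)$, one recovers its kernel by $k_h(x,y) \bydef h(e_x)(y)$, and then one checks $h_{k_h} = h$ using that $e_x$ generate $Q^X$ as a module (every $f \in Q^X$ is $\bigvee_{x} f(x)\ast e_x$) together with the fact that $h$ preserves joins and scalar multiplication; conversely $k_{h_k} = k$ is immediate by plugging $e_x$ into (\ref{qwtransformeq}). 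This already gives the sup-lattice isomorphism recalled from \cite[Theorem 5.19]{russo}, and since $\Phi$ is plainly join-preserving (the join in $Q^{X\times X}$ is pointwise and $h_{\bigvee_i k_i} = \bigvee_i h_{k_i}$ because multiplication distributes over joins in $Q$), the order-theoretic part is routine.

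The substantive point — and the reason the reverse-order composition is used in defining the product on $\End_Q(Q^X)$ — is multiplicativity: I must show $\Phi(h \star k) = \Phi(h) \cdot \Phi(k)$, i.e. $h_{h \star k} = h_k \circ h_h$, and also $\Phi(\id) = \id_{Q^X}$. The latter is an immediate computation from the definition of $\id$ and (\ref{qwtransformeq}). For the former I would compute both sides applied to an arbitrary $f \in Q^X$ at an arbitrary point $y \in X$. On one side,
\begin{equation*}
h_{h\star k}f(y) = \bigvee_{x\in X} f(x)\cdot (h\star k)(x,y) = \bigvee_{x\in X} f(x)\cdot \Bigl(\bigvee_{z\in X} h(x,z)\,k(z,y)\Bigr) = \bigvee_{x,z\in X} f(x)\cdot h(x,z)\cdot k(z,y),
\end{equation*}
using the distributivity axiom (Q3) and associativity of the product in $Q$. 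On the other side,
\begin{equation*}
(h_k\circ h_h)f(y) = h_k(h_h f)(y) = \bigvee_{z\in X} (h_h f)(z)\cdot k(z,y) = \bigvee_{z\in X}\Bigl(\bigvee_{x\in X} f(x)\cdot h(x,z)\Bigr)\cdot k(z,y) = \bigvee_{x,z\in X} f(x)\cdot h(x,z)\cdot k(z,y),
\end{equation*}
again by (Q3) and associativity; the two double joins coincide, so the identity holds. Note that had we defined the product on $\End_Q(Q^X)$ as ordinary composition $h_h \circ h_k$, the exponents would come out in the wrong order and $\Phi$ would instead be an anti-isomorphism — this is exactly the bookkeeping that the footnote flags.

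Assembling the pieces: $\Phi$ is a bijection, preserves arbitrary joins, preserves the product, and sends $\id$ to $\id_{Q^X}$, hence it is an isomorphism in $\Q$. The main obstacle, such as it is, is purely organizational rather than conceptual: one must be careful that the "kernel" recovery $k_h(x,y) = h(e_x)(y)$ really inverts $\Phi$, which hinges on the freeness of $Q^X$ (part (c) of Proposition \ref{facts}) — concretely, that a $Q$-module endomorphism is determined by its values on the basis $\{e_x\}$ and that those values can be arbitrary, so that every kernel in $Q^{X\times X}$ arises. Everything else is a direct manipulation of the distributivity axiom (Q3), and I would present the multiplicativity computation in full as the heart of the proof while leaving the bijection and join-preservation as brief verifications, possibly citing \cite[Theorem 5.19]{russo} for the latter.
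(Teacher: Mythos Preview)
Your proposal is correct and follows essentially the same approach as the paper: the paper defines the very same map $\eta\colon k \mapsto h_k$, invokes \cite[Theorem 5.19]{russo} for the sup-lattice isomorphism (which you instead sketch directly, though you also mention citing it), and then verifies $\eta(\id)=\id_{Q^X}$ and multiplicativity by the identical double-join computation you wrote out. There is no substantive difference in strategy or content.
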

\begin{proof}
By \cite[Theorem 5.19]{russo}, the map $\eta: k \in M_X(Q) \lmapsto h_k \in \End_Q(Q^X)$ is a sup-lattice isomorphism, so we just need to prove that $\eta$ preserves the monoid structure.

First, it can be immediately observed that $\eta(\id) = \id_{Q^X}$. Now, given a map $k \in Q^{X \times X}$, for all $f \in Q^X$, $h_k(f)(y) = \bigvee_{x \in X} f(x) k(x,y)$. Hence
$$\begin{array}{l}
h_{k \star l} (f)(y) = \bigvee_{x \in X} f(x) \left(\bigvee_{z \in X} k(x,z) l(z,y)\right) \\
= \bigvee_{x \in X} \bigvee_{z \in X} (f(x) (k(x,z) l(z,y))) \\
= \bigvee_{z \in X} \left(\bigvee_{x \in X} f(x) k(x,z)\right) l(z,y) \\
= \bigvee_{z \in X} h_k(f)(z) l(z,y) \\
= (h_l \circ h_k)(f)(y),
\end{array}$$
for all $k, l \in M_X(Q)$ and $f \in Q^X$. Therefore $\eta$ is a quantale isomorphism.
\end{proof}

Thanks to Theorem \ref{matrixendo} we can now give a characterization of projective objects in the category $Q\lMod$ in terms of $Q$-valued maps.

\begin{theorem}\label{proj}
Let $M$ be $Q$-module and $X \subseteq M$ be a set of generators for $M$. $M$ is projective if and only if there exists a multiplicatively idempotent element $k$ of $M_{X}(Q)$ such that $M \cong Q \cdot \{k(x,_-)\}_{x \in X}$.
\end{theorem}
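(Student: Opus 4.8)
The plan is to use the standard characterization of projective objects in an algebraic category: $M$ is projective if and only if it is a retract of a free module, i.e. there exist morphisms $s\colon M\to Q^X$ and $r\colon Q^X\to M$ with $r\circ s=\id_M$. Since $X\subseteq M$ generates $M$, there is a canonical surjection $\pi\colon Q^X\to M$ sending $e_x\mapsto x$, and this surjection splits precisely when $M$ is projective; so throughout I would fix such a surjection $\pi$ and, in the projective case, a section $\sigma\colon M\to Q^X$ with $\pi\circ\sigma=\id_M$.

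\medskip\noindent
\textbf{($\Rightarrow$)} Assume $M$ is projective. Form $p\bydef\sigma\circ\pi\in\End_Q(Q^X)$. Then $p\circ p=\sigma\circ(\pi\circ\sigma)\circ\pi=\sigma\circ\pi=p$, so $p$ is an idempotent endomorphism; moreover its image is $\sigma(M)$, which is isomorphic to $M$ via $\sigma$ (with inverse $\pi$ restricted to $\sigma(M)$). By Theorem \ref{matrixendo} the isomorphism $\eta\colon M_X(Q)\to\End_Q(Q^X)$, $k\mapsto h_k$, is a quantale isomorphism, so there is a unique $k\in M_X(Q)$ with $h_k=p$, and since $\eta$ preserves the monoid structure, $h_{k\star k}=h_k\circ h_k=p\circ p=p=h_k$, whence $k\star k=k$, i.e. $k$ is multiplicatively idempotent. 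It remains to identify $\sigma(M)=\operatorname{im}h_k$ with $Q\cdot\{k(x,{}_-)\}_{x\in X}$. This follows by computing $h_k(e_x)$: by \eqref{qwtransformeq}, $h_k(e_x)(y)=\bigvee_{x'\in X}e_x(x')\,k(x',y)=k(x,y)$ since $e_x(x)=1$ and $e_x(x')=\bot$ for $x'\ne x$; so $h_k(e_x)=k(x,{}_-)$. As $\{e_x\}_{x\in X}$ generates $Q^X$ and $h_k$ is a module homomorphism (hence join- and scalar-multiplication-preserving), $\operatorname{im}h_k=h_k(Q^X)=Q\cdot h_k(\{e_x\}_{x\in X})=Q\cdot\{k(x,{}_-)\}_{x\in X}$. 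Combining, $M\cong\sigma(M)=\operatorname{im}h_k=Q\cdot\{k(x,{}_-)\}_{x\in X}$.

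\medskip\noindent
\textbf{($\Leftarrow$)} Conversely, suppose $k\in M_X(Q)$ satisfies $k\star k=k$ and $M\cong Q\cdot\{k(x,{}_-)\}_{x\in X}$. Put $p\bydef h_k\in\End_Q(Q^X)$; by Theorem \ref{matrixendo}, $p\circ p=h_k\circ h_k=h_{k\star k}=h_k=p$, so $p$ is an idempotent endomorphism of the free module $Q^X$. A retract of $Q^X$ cut out by an idempotent endomorphism is always a direct summand, hence projective: indeed, $\operatorname{im}p$ together with the corestriction $Q^X\to\operatorname{im}p$ of $p$ and the inclusion $\operatorname{im}p\hookrightarrow Q^X$ exhibits $\operatorname{im}p$ as a retract of $Q^X$ (the composite inclusion-then-$p$ is the identity on $\operatorname{im}p$ because $p$ fixes its own image, $p$ being idempotent). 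Finally, by the same computation as above, $p(e_x)=k(x,{}_-)$, so $\operatorname{im}p=Q\cdot\{k(x,{}_-)\}_{x\in X}\cong M$; thus $M$ is (isomorphic to) a retract of a free $Q$-module, hence projective.

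\medskip\noindent
The routine parts are the computation $h_k(e_x)=k(x,{}_-)$ and the transfer of multiplicative idempotency across $\eta$, both of which are immediate from \eqref{qwtransformeq} and Theorem \ref{matrixendo}. The main obstacle, and the point that needs care, is the argument that an idempotent endomorphism of a free object in this variety genuinely splits off a \emph{projective} summand — i.e. that ``retract of free $\Leftrightarrow$ projective'' holds here and that $\operatorname{im}p$ is the retract. For the direction producing the idempotent one needs that projectivity of $M$ yields a section $\sigma$ of the canonical surjection $\pi\colon Q^X\to M$; this is exactly the lifting property of projective objects applied to the epimorphism $\pi$, valid because $Q\lMod$ is an algebraic category (Proposition \ref{facts}(e)) so that regular epimorphisms are surjections and $\pi$ is one. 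For the converse one must check that $\operatorname{im}p$ is closed under arbitrary joins and scalar multiplication — which it is, since $p$ is a $Q$-module homomorphism — so that it is a genuine submodule and the corestriction/inclusion pair is a morphism pair in $Q\lMod$; then idempotency of $p$ gives $r\circ s=\id$ and projectivity of $\operatorname{im}p$ follows from that of $Q^X$ by the usual retract argument.
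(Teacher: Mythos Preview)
Your argument is correct and follows essentially the same route as the paper: both directions hinge on the correspondence between idempotent endomorphisms of $Q^X$ and retracts thereof, with $h_k = \mu\circ\pi$ (your $\sigma\circ\pi$) and the identification $h_k(e_x)=k(x,\_)$. The only cosmetic difference is that the paper defines $k$ directly by $k(x,\_)\bydef\mu(x)$ and then checks $k\star k=k$ elementwise from the fact that $h_k$ fixes its image, whereas you first obtain the idempotent endomorphism $p$ and then pull idempotency back through the isomorphism $\eta$ of Theorem~\ref{matrixendo}; these are the same computation in slightly different order.
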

\begin{proof}
Let $M$ be projective. Since $X$ is a generating set for $M$ and $Q^X$ is free over $X$, $M$ is a retract of $Q^X$. More precisely, the identity map of $X$ can be extended to a unique $Q$-module homomorphism $\pi: Q^X \lto M$ which is obviously onto. Then the projectivity of $M$ implies the existence of a morphism $\mu: M \lto Q^X$ such that $\pi \circ \mu = \id_M$, and $\mu$ must be injective. So, if we set $k(x,_-) = \mu(x)$, for all $x \in X$, we have $M \cong Q \cdot \{k(x,_-)\}_{x \in X} = h_k[Q^X]$.

So $h_k$ is a retraction whose corresponding section is the inclusion map. Hence, for any element $f \in Q \cdot \{k(x,_-)\}_{x \in X}$, $h_k(f) = f$. In particular, for all $x \in X$, $k(x,_-) = \bigvee_{x \in X} 1 \ast k(x,_-) \in Q \cdot \{k(x,_-)\}_{x \in X}$ and therefore $h_k(k(x,_-)) = k(x,_-)$. Then we have
$$k(x,_-) = h_k(k(x,_-)) = \bigvee_{y \in X} k(x,y) \ast k(y,_-)$$
for all $x \in X$, that is, $k \star k = k$ in $M_X(Q)$.

Conversely, let $M = Q \cdot \{k(x,_-)\}_{x \in X}$ and $k \star k = k$. Any element $\alpha \in Q \cdot \{k(x,_-)\}_{x \in X}$ can be written as $\bigvee_{x \in X} a_x \ast k(x,_-)$, hence $\alpha = h_k(a_-)$. By Theorem \ref{matrixendo} we have 
$$h_k(\alpha) = h_k(h_k(a_-)) = h_{k \star k}(a_-) = h_k(a_-) = \alpha.$$
It follows that the inclusion map of $Q \cdot \{k(x,_-)\}_{x \in X}$ in $Q^X$ is a section whose corresponding retraction is $h_k$. Then $Q \cdot \{k(x,_-)\}_{x \in X}$ is a retract of a free module and therefore is projective. 
\end{proof}

It is immediate to see that finitely generated projective modules over a given quantale $\la Q, \bigvee, \cdot, 1\ra$ coincide with finitely generated projective semimodules over the idempotent semiring $\la Q, \vee, \cdot, \bot, 1\ra$ \cite{dnr}. More precisely, every finitely generated semimodule $M$ over $Q$ is complete and the external multiplication over it distributes over arbitrary joins both in $M$ and $Q$, hence $M$ is also a quantale module, with the same finite generating set, and is projective because Theorem \ref{proj}, in that case, restricts to the analogous characterization presented in \cite{dnr}. Reciprocally, every finitely generated projective $Q$-module is a finitely generated projective $Q$-semimodule, with the same generating set. Moreover, products and coproducts of finitely many quantale modules and of finitely many semimodules over semirings are constructed exactly in the same way. As a consequence, the construction of the Grothendieck group of a semiring, presented in \cite[Section 6]{dnr}, immediately extends to quantales.

So we have
\begin{definition}\label{k0}
Let $Q$ be a quantale, $\la\P(Q), \oplus, [\{\bot\}]\ra$ the Abelian monoid of isomorphism classes of projective left $Q$-modules and let $J = \frab(\P(Q))$ the free Abelian group generated by such isomorphism classes. For any projective left $Q$-module $M$, we denote by $[M]$ its isomorphism class. Let $H$ be the subgroup of $J$ generated by all the expressions of type $[M] + [N] - [M \oplus N]$, with $M$ and $N$ projective modules.

We define the \emph{\gr group} of a quantale $Q$ to be the factor group $J/H$ and denote it by $K_0Q$.
\end{definition}

\begin{theorem}\label{k0thm}
$K_0$ is a functor from $\Q$ to $\Gab$.
\end{theorem}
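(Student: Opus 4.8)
The plan is to verify that $K_0$, as defined in Definition \ref{k0}, actually assigns to each quantale an Abelian group, and that it acts functorially on quantale morphisms. Since $J/H$ is a quotient of an Abelian group by a subgroup, it is automatically an Abelian group, so the object part requires only the observation that $\la \P(Q), \oplus, [\{\bot\}]\ra$ is a well-defined Abelian monoid; this follows from the remarks preceding Definition \ref{k0}, namely that finite coproducts of projective $Q$-modules are projective (each being a retract of a free module, and a finite coproduct of retracts of free modules is a retract of a free module), that $\oplus$ is commutative and associative up to isomorphism, and that $\{\bot\}$ is the neutral element. The bulk of the proof is the morphism part.

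First I would describe the action of $K_0$ on a quantale homomorphism $\phi \colon Q \lto R$. Given such a $\phi$, there is a base-change (extension of scalars) functor $R \tensor_Q - \colon Q\lMod \lto R\lMod$, which is left adjoint to the restriction-of-scalars functor along $\phi$; being a left adjoint it preserves coproducts, and it sends the free $Q$-module $Q^X$ to the free $R$-module $R^X$, hence sends retracts of free $Q$-modules to retracts of free $R$-modules, i.e. projective $Q$-modules to projective $R$-modules. Thus $M \mapsto R \tensor_Q M$ induces a map $\P(\phi) \colon \P(Q) \lto \P(R)$ on isomorphism classes which is a monoid homomorphism (it respects $\oplus$ and sends $[\{\bot\}]$ to $[\{\bot\}]$, since $R \tensor_Q \{\bot\} \cong \{\bot\}$). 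By freeness of $\frab$, $\P(\phi)$ extends uniquely to a group homomorphism $J(Q) \lto J(R)$, and one checks that it carries the generators $[M] + [N] - [M \oplus N]$ of $H_Q$ into $H_R$, so it descends to a group homomorphism $K_0\phi \colon K_0 Q \lto K_0 R$.

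Then I would check the two functoriality axioms. For identities, $\id_Q \tensor_Q M \cong M$ naturally, so $\P(\id_Q)$ is the identity on $\P(Q)$, hence $K_0\id_Q = \id_{K_0 Q}$. For composition, given $\phi \colon Q \lto R$ and $\psi \colon R \lto S$, the canonical natural isomorphism $S \tensor_R (R \tensor_Q M) \cong S \tensor_Q M$ shows that $\P(\psi \circ \phi) = \P(\psi) \circ \P(\phi)$ on isomorphism classes, and since both sides of $K_0(\psi\circ\phi) = K_0\psi \circ K_0\phi$ are determined by their restriction to the generating classes $[M]$ of $J(Q)$, the identity follows. Everything then passes to the quotients $K_0$ by the universal property.

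The main obstacle is foundational rather than computational: the argument as sketched presupposes a well-behaved tensor product $R \tensor_Q -$ for quantale modules with the expected adjunction and base-change properties. If that machinery is not available in the form needed, the alternative is to define $K_0\phi$ concretely via Theorem \ref{proj}: represent a finitely generated projective $Q$-module as $Q \cdot \{k(x,\_)\}_{x \in X}$ for an idempotent $k \in M_X(Q)$, push the matrix entrywise through $\phi$ to get an idempotent $k^\phi \in M_X(R)$ (idempotency is preserved because $\phi$ is a quantale homomorphism, so it commutes with the $\star$ operation defined by finite sums and products), and set $K_0\phi([Q\cdot\{k(x,\_)\}]) = [R \cdot \{k^\phi(x,\_)\}]$; one then has to verify this is independent of the choice of generating set $X$ and of the idempotent $k$ representing the class, which is the delicate point and is handled by the standard ``two idempotents define isomorphic modules iff they are equivalent, and $\phi$ preserves equivalence'' argument. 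Either way, the care needed is in the well-definedness on isomorphism classes; the remaining verifications are routine diagram-chasing.
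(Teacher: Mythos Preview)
The paper does not actually supply a proof of Theorem~\ref{k0thm}: the result is stated immediately after Definition~\ref{k0} and is justified only implicitly by the paragraph preceding that definition, which identifies finitely generated projective $Q$-modules with finitely generated projective semimodules over the underlying idempotent semiring and then invokes \cite[Section~6]{dnr}. In other words, the paper's ``proof'' is a one-line reduction to the semiring case together with a citation.

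Your proposal is therefore not wrong but rather far more detailed than what the paper provides. Both routes you outline are correct. The tensor/base-change argument is the cleanest conceptually, and the requisite machinery (tensor product of quantale modules, left adjointness to restriction of scalars, $R\otimes_Q Q^X\cong R^X$) is indeed available in the quantale-module setting, so your hesitation there is unnecessary. Your second route --- pushing an idempotent $k\in M_X(Q)$ entrywise through $\phi$ --- is essentially how \cite{dnr} handles functoriality for semirings, and it dovetails directly with Theorem~\ref{proj}; note that $\phi$ preserves arbitrary joins, so the argument works even for infinite $X$, not just finite ones. The well-definedness check you flag (independence of the choice of idempotent representing a given isomorphism class) is the only point requiring care, and the standard equivalence-of-idempotents argument you mention goes through verbatim. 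So: your sketch is a genuine proof, whereas the paper simply defers to the cited reference.
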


Theorems \ref{wadjointpair} and \ref{matrixendo} hold for non-unital quantales too, but it is important to underline that, if $Q$ is a non-unital quantale, then the free modules over $Q$ coincide with the free modules over the unital quantale $Q[e]$, exactly as in the case of ring modules. So, both the results can be stated and proved for the (non-free) modules of type $Q^X$ as well as for the free $Q$-modules $Q[e]^X$, by suitably using $Q$ or $Q[e]$.

Regarding the rest of the section, in the case of non-unital quantales, again, projective objects of $Q\lMod$ coincide with those of $Q[e]\lMod$, and therefore everything works the same way and the $K_0$ group of a non-unital quantale $Q$ coincide with the one of $Q[e]$.

\section{Ideals and congruences of quantale modules}
\label{qidsec}

In this section we shall introduce ideals of quantale modules. Once observed that the $\bot$-class of a module congruence is an ideal, we will show that, given a module $M$ and an ideal $I$ of it, it is possible to define in a canonical way a congruence whose $\bot$-class is the given ideal, and that such a congruence is not unique in general. It is, indeed, the largest congruence with that property.

\begin{definition}\label{ideal}
Let $I$ be a subset of a $Q$-module $M$. $I$ is called a \emph{$Q$-ideal} of $M$ provided
\begin{enumerate}[(i)]
\item $X \subseteq I$ implies $\bigvee X \in I$,
\item $v \in I$ and $w \leq v$ imply $w \in I$,
\item $v \in I$ implies $a \cdot v \in I$, for all $a \in Q$.
\end{enumerate}
\end{definition}

By (i), since $\varnothing \subseteq I$, $\bot = \bigvee \varnothing \in I$ for any ideal $I$; in addition, both $M$ and $\{\bot\}$ are ideals.

As usual, for any subset $S$ of a $Q$-module $M$, we will denote by $(S]$ the \emph{ideal generated by $S$}, i.e., the smallest ideal containing $S$. An ideal is called \emph{principal} if it is generated by a singleton; in this case we will write $(v]$ instead of $(\{v\}]$. Among other properties, the following result shows that all ideals of quantale modules are indeed principal.

\begin{proposition}\label{idprinc}
For any quantale $Q$ and any $Q$-module $M$, the following properties hold.
\begin{enumerate}[(i)]
\item For any $Q$-ideal $I$ of $M$, $I = [\bot, \bigvee I]$
\item For any subset $S$ of $M$, $(S] = \left[\bot, \top \ast \bigvee S\right]$. So, in particular, $(v] = [\bot, \top \ast v]$ for all $v \in M$.
\item The $Q$-ideals of $M$ are precisely the intervals $[\bot, v]$ with $v$ such that $\top \ast v = v$.
\item Every $Q$-ideal of $M$ is principal.
\item If $[\bot, v]$ is a $Q$-ideal of $M$, then $[\top,v]^{\op} \bydef \{w \in M \mid \top \geq w \geq v\}$ is a $Q$-ideal of $M^{\op}$.
\end{enumerate}
\end{proposition}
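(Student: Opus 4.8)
The plan is to establish the five items in sequence, each one feeding into the next, relying throughout on Proposition~\ref{basicmq} and on the distributivity axioms for modules.

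\textbf{Item (i).} Let $I$ be a $Q$-ideal and put $v = \bigvee I$. By closure under arbitrary joins (Definition~\ref{ideal}(i)), $v \in I$; hence by downward closure (Definition~\ref{ideal}(ii)) every $w \leq v$ belongs to $I$, so $[\bot, v] \subseteq I$. Conversely every element of $I$ is $\leq \bigvee I = v$, so $I \subseteq [\bot, v]$. This gives $I = [\bot, \bigvee I]$.

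\textbf{Item (iii) and the normalization $\top \ast v = v$.} I would do (iii) next, since (ii) and (iv) follow from it. First suppose $I = [\bot, v]$ is a $Q$-ideal. Then $v \in I$, so by Definition~\ref{ideal}(iii), $\top \ast v \in I$, whence $\top \ast v \leq v$; but $1 \leq \top$ gives $v = 1 \ast v \leq \top \ast v$ by Proposition~\ref{basicmq}(i), so $\top \ast v = v$. Conversely, suppose $\top \ast v = v$; I claim $[\bot, v]$ is a $Q$-ideal. Closure under joins: if $X \subseteq [\bot, v]$ then $\bigvee X \leq v$. Downward closure is immediate. Closure under scalars: if $w \leq v$ and $a \in Q$, then $a \ast w \leq a \ast v \leq \top \ast v = v$ using Proposition~\ref{basicmq}(i) and $a \leq \top$. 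Hence $[\bot,v]$ is an ideal, proving (iii).

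\textbf{Items (ii) and (iv).} For (ii): by (i), $(S] = [\bot, \bigvee (S]]$, so it suffices to identify $\bigvee (S]$ with $\top \ast \bigvee S$. On one hand, since any ideal containing $S$ contains $\top \ast s$ for each $s \in S$ and is closed under joins, it contains $\top \ast \bigvee S$ (using the distributivity axiom (M2)(i) to pull the join out: $\top \ast \bigvee S = \bigvee_{s \in S} \top \ast s$); so $[\bot, \top \ast \bigvee S] \subseteq (S]$. On the other hand, $\top \ast (\top \ast \bigvee S) = (\top \cdot \top) \ast \bigvee S = \top \ast \bigvee S$ because $\top \cdot \top = \top$ in any quantale (as $\top \cdot \top \geq 1 \cdot \top = \top$); hence by (iii), $[\bot, \top \ast \bigvee S]$ is itself an ideal, and it clearly contains each $s \in S$ (since $s = 1 \ast s \leq \top \ast s \leq \top \ast \bigvee S$), so $(S] \subseteq [\bot, \top \ast \bigvee S]$. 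The special case $S = \{v\}$ is immediate. Item (iv) is then a one-line consequence: by (i), every ideal $I$ equals $[\bot, \bigvee I]$, which by the displayed special case of (ii) equals $(\bigvee I]$.

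\textbf{Item (v).} Recall from the preliminaries that $M^{\op} = \la M, \bigwedge\ra$ is a right (hence, in our generic left-module convention, a) $Q$-module with scalar action $\ust$. The set $[\top, v]^{\op} = \{w \mid v \leq w \leq \top\}$ is, in $M^{\op}$, the down-set of $v$ below the $M^{\op}$-top element $\top$ (which is $\bot_M$'s dual). By (iii) applied in $M^{\op}$, it is an ideal iff $\top \ust v = v$, where here $\top$ denotes the top of $Q$; but Proposition~\ref{basicmq}(ii) and (v) give $v \leq \top \ust v$, and from the hypothesis $\top \ast v = v$ together with Proposition~\ref{basicmq}(iv), $\top \ast (\top \ust v) \leq v$, so $\top \ust v \leq \top \ust (\top \ast v)$... here I would instead argue directly: since $\top \ast v = v$, for any $w$ with $v \leq w$ we have $\top \ust w \geq \top \ust v \geq v$ by Proposition~\ref{basicmq}(ii), and $\top \ast(\top \ust w) \le w$ so applying the residuation adjunction $\top \ast (-) \dashv \top \ust (-)$ shows $\top \ust w$ lies in $[v, \top]$; closure under the meet in $M^{\op}$ (i.e. arbitrary meets in $M$) and downward closure in $M^{\op}$ (i.e. upward closure in $M$) are checked directly from Proposition~\ref{basicmq}(ii).

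\textbf{Main obstacle.} The only genuinely delicate point is the bookkeeping in item (v): one must be careful that ``ideal of $M^{\op}$'' unwinds to a statement about meets and up-sets in $M$, and that the relevant scalar action is $\ust$, not $\ast$; the verification that $\top \ust v = v$ is the dual normalization condition, via Proposition~\ref{basicmq}(iv)--(v), is where I expect to spend the most care. Everything else is a routine unwinding of the axioms once (iii) is in hand.
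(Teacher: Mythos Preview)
Your argument for items (i)--(iv) is correct and essentially identical to the paper's, up to the harmless reordering of establishing (iii) before (ii).

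For (v) your strategy is also the paper's---reduce to the single condition $\top \ust v = v$ and then invoke (iii) for the $Q$-module $M^{\op}$---but the execution is incomplete. You correctly obtain $v \leq \top \ust v$ from $v \leq \top \ust (\top \ast v) = \top \ust v$ (Proposition~\ref{basicmq}(v) together with the hypothesis $\top \ast v = v$), but you never close the reverse inequality; the attempt via Proposition~\ref{basicmq}(iv) goes in circles, and the fallback ``direct verification'' only checks closure of $[v,\top]$ under the action of $\top$, not of an arbitrary scalar $a \in Q$. The missing step is a single line, and it is exactly what the paper uses: since $1 \leq \top$ and $\ust$ is order-reversing in the denominator (Proposition~\ref{basicmq}(ii)), one has $\top \ust v \leq 1 \ust v = v$. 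This gives $\top \ust v = v$, and then (iii) applied to $M^{\op}$ finishes (v) at once---no separate axiom-by-axiom check is needed.
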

\begin{proof}
\begin{enumerate}[(i)]
\item Obvious.
\item By (i) and (iii) of Definition \ref{ideal}, $\top \ast \bigvee S \in (S]$, hence $\left[\bot, \top \ast \bigvee S\right] \subseteq (S]$ by (ii) of the same definition.

Now we must prove that $\left[\bot, \top \ast \bigvee S\right]$ is a $Q$-ideal. Since, for any $v \in M$, $\top \ast (\top \ast v) = (\top \top) \ast v = \top \ast v$, we have that $\top \ast v \leq \top \ast \left(\top \ast \bigvee S\right) = \top \ast \bigvee S$ whence $\top \ast v \in \left[\bot, \top \ast \bigvee S\right]$ for all $v \in \left[\bot, \top \ast \bigvee S\right]$. Then, for all $a \in Q$ and $v \in \left[\bot, \top \ast \bigvee S\right]$, $a \ast v \leq \top \ast v \leq \top \ast \bigvee S$ and, therefore, $a \ast v \in \left[\bot, \top \ast \bigvee S\right]$.
\item Let $v \in M$ such that $\top \ast v = v$. As in the proof of (ii), it is easy to prove that $[\bot, v]$ is a $Q$-submodule of $M$ and then, by (i), a $Q$-ideal. The converse also follows from (i).
\item It follows immediately from (iii).
\item Recall that the scalar multiplication on $M^{\op}$ is $\ust$. So, since $[\bot,v]$ is a $Q$-ideal of $M$, then $\top \ast v = v$ and therefore $v \leq \top \ust v = \bigvee\{w \in M \mid \top \ast w \leq v\}$. On the other hand, by Proposition \ref{basicmq} (ii), $\top \ust v \leq 1 \ust v = v$. Then it follows from (iii) that $[\top,v]^{\op}$ is an ideal of $M^{\op}$.
\end{enumerate}
\end{proof}

\begin{definition}\label{idealel}
Let $Q$ be a quantale and $M$ a $Q$-module. An element $v \in M$ satisfying condition (iii) of Proposition \ref{idprinc}, i.e., such that $\top \ast v = v$, will be called an \emph{ideal element}. 
\end{definition}

\begin{proposition}\label{int}
Let $Q$ be an integral quantale and let $M$ be a $Q$-module. Then every element of $M$ is ideal, i.e., $[\bot, v]$ is a $Q$-ideal for all $v \in M$.
\end{proposition}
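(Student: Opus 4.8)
Let $Q$ be an integral quantale and let $M$ be a $Q$-module. Then every element of $M$ is ideal, i.e., $[\bot, v]$ is a $Q$-ideal for all $v \in M$.
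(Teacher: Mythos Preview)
Your submission is not a proof: it is simply a verbatim restatement of the proposition, with no argument whatsoever. You need to actually show that $\top \ast v = v$ for every $v \in M$, which by Proposition~\ref{idprinc}(iii) is precisely the condition characterizing ideal elements.

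The paper's proof is one line: since $Q$ is integral, $\top = 1$, so $\top \ast v = 1 \ast v = v$ for all $v \in M$ by axiom (M3). You should supply this (or some equivalent) reasoning.
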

\begin{proof}
If $Q$ is integral, $\top = 1$, hence $\top \ast v = 1 \ast v = v$ for any $v \in M$. 
\end{proof}

\begin{proposition}\label{idsub}
The set $\Id(M)$ of all the ideal elements of a $Q$-module $M$ is a sup-sublattice of $M$. Moreover, if $Q$ is commutative, $\Id(M)$ is a $Q$-submodule of $M$.
\end{proposition}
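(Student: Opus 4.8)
The plan is to verify the two closure conditions directly from the definition of ideal element, namely that $v \in \Id(M)$ iff $\top \ast v = v$, using only the module axioms and Proposition \ref{basicmq}.

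First I would show $\Id(M)$ is closed under arbitrary joins. Let $\{v_i\}_{i \in I} \subseteq \Id(M)$, so $\top \ast v_i = v_i$ for each $i$. Using distributivity of $\ast$ over joins in the second coordinate (axiom (M2)(i)), $\top \ast \bigvee_{i} v_i = \bigvee_i (\top \ast v_i) = \bigvee_i v_i$, so $\bigvee_i v_i \in \Id(M)$. The bottom element $\bot$ is the empty join, hence automatically in $\Id(M)$, so $\Id(M)$ is a genuine sup-sublattice (it contains $\bot$ and is closed under all joins); note it need not be closed under meets, which is why we only claim a sup-sublattice.

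Next, assume $Q$ is commutative. I would show that for $a \in Q$ and $v \in \Id(M)$, the element $a \ast v$ is again an ideal element. Compute $\top \ast (a \ast v) = (\top \cdot a) \ast v$ by axiom (M1). Since $a \leq \top$ and multiplication is order-preserving, $\top \cdot a \leq \top \cdot \top = \top$; conversely, by commutativity, $\top \cdot a = a \cdot \top \geq$ ... here I must be a bit careful: in general $\top \cdot a$ need not equal $\top$. The right identity to use is: $\top \ast (a \ast v) = (\top \cdot a) \ast v = (a \cdot \top) \ast v = a \ast (\top \ast v) = a \ast v$, where the second equality uses commutativity of $Q$ and the third uses (M1) again. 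So $a \ast v \in \Id(M)$. Combined with closure under joins, $\Id(M)$ is a $Q$-submodule. I expect the commutativity step to be the only subtle point: without it, $\top \cdot a$ cannot be moved past the scalar action, and indeed $\Id(M)$ generally fails to be a submodule in the non-commutative case — this is exactly why the hypothesis appears in the statement.

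Putting it together: the first paragraph's computation establishes the sup-sublattice claim for arbitrary $Q$, and the second establishes the submodule claim under commutativity. Both are short manipulations once the defining condition $\top \ast v = v$ and the rewriting $\top \ast (a \ast v) = (\top \cdot a)\ast v$ are in hand; there is no real obstacle beyond remembering to invoke commutativity at the right moment.
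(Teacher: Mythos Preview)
Your proof is correct and follows essentially the same approach as the paper: closure under joins via distributivity of $\ast$ over joins, and closure under scalar multiplication via the chain $\top \ast (a \ast v) = (\top a) \ast v = (a \top) \ast v = a \ast (\top \ast v) = a \ast v$ using commutativity. The brief detour where you consider whether $\top \cdot a = \top$ is unnecessary (and you correctly abandon it), but the final argument matches the paper's exactly.
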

\begin{proof}
We know that $\bot \in \Id(M)$. If $\{i_x\}_{x \in X}$ is a family of ideal elements, $\top \ast \bigvee_{x \in X} i_x = \bigvee_{x \in X} (\top \ast i_x) = \bigvee_{x \in X} i_x$, so $\bigvee_{x \in X} i_x \in \Id(M)$ and $\Id(M)$ is a sup-sublattice of $M$.

Now let $Q$ be commutative. For any $a \in Q$ and $i \in \Id(M)$, $\top \ast (a \ast i) = (\top a) \ast i = (a \top) \ast i = a \ast (\top \ast i) = a \ast i$, hence $a \ast i \in \Id(M)$ and the proposition is proved.
\end{proof}

It is important to notice that a $Q$-module does not have necessarily maximal ideals, as the following example shows.
\begin{exm}\label{nomax}
Let $Q = \la [0,1], \bigvee, \ast, 0, 1\ra$, where $\ast$ is any t-norm, and consider $Q$ as a module over itself. For any $a \in [0,1]$ the interval $[0,a]$ is clearly a $Q$-ideal, then there is no maximal $Q$-ideal of $Q$.
\end{exm}

Let $Q$ be a quantale and $M$ a $Q$-module. For any $v \in M$ and any $Q$-ideal $I$ of $M$, we denote by $i^v$ the scalar $\bigvee I \ost v$. It is easy to see that
$$i^v = \bigvee \left\{a \in Q \ \mid \ a \ast v \leq \bigvee I\right\} = \bigvee \left\{a \in Q \ \mid \ a \ast v \in I\right\}.$$

\begin{lemma}\label{qxprop}
Let $I$ be a $Q$-ideal of $M$. For all $v, w \in M$, $X \subseteq M$ and $a \in Q$ the following properties hold:
\begin{enumerate}[(i)]
\item $a i^v \leq i^v$;
\item if $1 \leq a$, then $a i^v = i^v$;
\item if $v \leq w$, then $i^w \leq i^v$;
\item $i^{\bigvee X} = \bigwedge_{v \in X} i^v$;
\item $i^{a \ast v} = i^v/a$.
\end{enumerate}
\end{lemma}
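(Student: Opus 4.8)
The plan is to derive every item from a single residuation equivalence, relying on only two facts: by Proposition~\ref{idprinc}(i), $I = [\bot, \bigvee I]$, so $a \ast v \in I$ is the same as $a \ast v \leq \bigvee I$; and the adjunction defining $\ost$ on $M$ gives, for every $b \in Q$,
\[
b \leq i^v \iff b \ast v \leq \bigvee I \iff b \ast v \in I .
\]
I would record this equivalence at the outset and then dispatch the five items one by one.

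For (i): by Proposition~\ref{basicmq}(iii), $i^v \ast v = (\bigvee I \ost v) \ast v \leq \bigvee I$, hence by (M1) $(a i^v) \ast v = a \ast (i^v \ast v) \leq a \ast \bigvee I$; since $\bigvee I \in I$ and $I$ is closed under scalar multiplication, $a \ast \bigvee I \in I$, so $(a i^v) \ast v \in I$ and the displayed equivalence gives $a i^v \leq i^v$. For (ii): one inequality is (i), and the reverse one follows because $1 \leq a$ and the quantale product is order-preserving, so $i^v = 1 \cdot i^v \leq a i^v$. For (iii): $\ost$ is order-reversing in the denominator by Proposition~\ref{basicmq}(ii), hence $v \leq w$ forces $i^w = \bigvee I \ost w \leq \bigvee I \ost v = i^v$. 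For (iv): $\ost$ sends joins in the denominator to meets (again Proposition~\ref{basicmq}(ii)), so $i^{\bigvee X} = \bigvee I \ost \bigvee X = \bigwedge_{v \in X}(\bigvee I \ost v) = \bigwedge_{v \in X} i^v$. For (v): for every $b \in Q$, combining the residuation of the quantale product with (M1) and the displayed equivalence,
\[
b \leq i^v / a \iff b a \leq i^v \iff (b a) \ast v \leq \bigvee I \iff b \ast (a \ast v) \leq \bigvee I \iff b \leq i^{a \ast v},
\]
whence $i^{a \ast v} = i^v / a$.

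I do not anticipate a genuine obstacle: the argument is pure bookkeeping with the residuation calculus of Proposition~\ref{basicmq} together with the description $I = [\bot, \bigvee I]$. The two places that want attention are invoking the ideal axioms precisely where $a \ast \bigvee I \leq \bigvee I$ is used in (i), and keeping straight that the slash $/$ in (v) denotes the residual of the quantale multiplication on $Q$, whereas $\ost$ denotes the residual of the scalar action on $M$. Should one wish to avoid the ``for all $b$'' style, (i) and (v) can alternatively be read off from Proposition~\ref{basicmq}(vi)--(vii) with $v$ and $\bigvee I$ in the roles of $w$ and $v$, but the adjunction route above is the most economical.
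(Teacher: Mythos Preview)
Your proof is correct and follows essentially the same route as the paper: both arguments reduce everything to the residuation equivalence $b \leq i^v \iff b \ast v \leq \bigvee I$, invoke Proposition~\ref{basicmq}(ii) for (iii) and (iv), and run the same chain of adjunction equivalences for (v). Your treatment of (i) is in fact slightly more explicit than the paper's (which simply asserts $a \ast (i^v \ast v) \in I$), but the content is identical.
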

\begin{proof}
\begin{enumerate}[(i)]
\item For any $a \in Q$, $(a i^v) \ast v = a \ast (i^v \ast v) \in I$, hence $a i^v \in \{b \in Q \mid b \ast v \in I\}$ and $i^v$ is the supremum of this set. Therefore $a i^v \leq i^v$.
\item By (i), $a i^v \leq i^v$. On the other hand, from $1 \leq a$ follows $i^v = 1 i^v \leq a i^v$, whence the equality.
\item By Proposition \ref{basicmq} (ii).
\item Again by Proposition \ref{basicmq} (ii).
\item For all $b \in Q$,
$$\begin{array}{l}
b \leq i^v/a = \left(\bigvee I \ost v\right)/a \ \liff \ ba \leq \bigvee I \ost v \ \liff \\
ba \ast v \leq \bigvee I \ \liff \ b \ast (a \ast v) \leq \bigvee I \ \liff \\
b \leq \bigvee I \ost (a \ast v) = i^{a \ast v}.
\end{array}$$
\end{enumerate}
\end{proof}

\begin{theorem}\label{icong}
Let $Q$ be a quantale, $M$ a $Q$-module and $\theta$ a $Q$-module congruence on $M$. Then $\bot/\theta$ is a $Q$-ideal of $M$.

Conversely, for any $Q$-ideal $I$ of $M$, the relation $\theta_I$ defined by
\begin{equation}\label{icong1}
v \theta_I w \quad \textrm{ if and only if } \quad i^v = i^w
\end{equation}
or, that is the same,
\begin{equation}\label{icong2}
v \theta_I w \quad \textrm{ if and only if } \quad i^v \ast w \vee i^w \ast v \in I,
\end{equation}
is a $Q$-module congruence and $\bot/\theta_I = I$. Moreover, if $\theta$ is a congruence on $M$ such that $\bot/\theta = I$, then $\theta \ \subseteq \ \theta_I$, that is, $\theta_I$ is the largest congruence whose class of $\bot$ is $I$.
\end{theorem}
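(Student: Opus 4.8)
The plan is to prove the three assertions in order: (1) $\bot/\theta$ is a $Q$-ideal; (2) $\theta_I$ is a congruence with $\bot/\theta_I = I$, together with the equivalence of the two defining conditions \eqref{icong1} and \eqref{icong2}; and (3) the maximality of $\theta_I$.

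For (1), I would verify the three clauses of Definition \ref{ideal} for $I := \bot/\theta$. Closure under arbitrary joins: if $X \subseteq \bot/\theta$ then $x\,\theta\,\bot$ for all $x\in X$, so $\bigvee X\ \theta\ \bigvee_{x\in X}\bot = \bot$ since $\theta$ is a congruence of the sup-lattice reduct. Downward closure: if $v\in I$ and $w\le v$, then $w = w\wedge v$, i.e.\ $w = {}^M\bigvee\{w' : w'\le w,\ w'\le v\}$ — more simply, $v\,\theta\,\bot$ gives $w = v\wedge w\ \theta\ \bot\wedge w = \bot$ (meets are term-definable from joins only via residuation, so I would instead argue: $w\vee v = v\ \theta\ \bot$, and since $w\le w\vee v$ we get $w = w\wedge(w\vee v)$; cleanest is to use $v\,\theta\,\bot \Rightarrow v\vee w\,\theta\,w$, and $v\vee w = v \ge w$ forces $v\vee w=w$ hence $w\,\theta\,\bot$ — wait, that's circular). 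The clean route: from $w\le v$ we have $w\vee v = v$, so $w\vee v\ \theta\ \bot$, and also $w\vee v\ \theta\ w\vee\bot = w$ would need $v\,\theta\,\bot$; combining, $w\ \theta\ w\vee v\ \theta\ \bot$. Absorption of scalars: $v\,\theta\,\bot$ implies $a\ast v\ \theta\ a\ast\bot = \bot$ for all $a\in Q$.

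For (2) I would first record the equivalence of \eqref{icong1} and \eqref{icong2}. Using $i^v = \bigvee\{a : a\ast v\in I\}$ and clause (i) of Lemma \ref{qxprop}, note $i^v\ast v\in I$ since $I$ is closed under the join defining $i^v\ast v = (\bigvee\{a:a\ast v\in I\})\ast v = \bigvee\{a\ast v : a\ast v\in I\}$ (by (M2)(ii)). If $i^v = i^w =: c$, then $c\ast w = i^w\ast w\in I$ and $c\ast v = i^v\ast v\in I$, so the join in \eqref{icong2} lies in $I$. Conversely, if $i^v\ast w\vee i^w\ast v\in I$, then $i^v\ast w\in I$, so $i^v\le i^w$ by definition of $i^w$; symmetrically $i^w\le i^v$, giving $i^v=i^w$. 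That $\theta_I$ is an equivalence is immediate from \eqref{icong1}. For the congruence property with respect to $\bigvee$: if $v_j\,\theta_I\,w_j$ for $j$ in an index set, then $i^{v_j}=i^{w_j}$, so by Lemma \ref{qxprop}(iv), $i^{\bigvee v_j} = \bigwedge i^{v_j} = \bigwedge i^{w_j} = i^{\bigvee w_j}$, hence $\bigvee v_j\ \theta_I\ \bigvee w_j$. For compatibility with scalar multiplication: $v\,\theta_I\,w$ gives $i^v=i^w$, so by Lemma \ref{qxprop}(v), $i^{a\ast v} = i^v/a = i^w/a = i^{a\ast w}$, whence $a\ast v\ \theta_I\ a\ast w$. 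Finally $\bot/\theta_I = I$: note $i^\bot = \bigvee\{a : a\ast\bot\in I\} = \bigvee Q = \top$; so $v\,\theta_I\,\bot$ iff $i^v = \top$ iff $\top\ast v\in I$; since $I = [\bot,\bigvee I]$ is the interval of elements below the ideal element $\bigvee I$ (Proposition \ref{idprinc}(i),(iii)), and $v\le\top\ast v$ always (take $a=1\le\top$ in Proposition \ref{basicmq}(i) — rather, $v = 1\ast v\le\top\ast v$), we get: if $\top\ast v\in I$ then $v\in I$; conversely if $v\in I$ then $\top\ast v\in I$ by ideal clause (iii). Hence $\bot/\theta_I = I$.

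For (3), suppose $\theta$ is a congruence with $\bot/\theta = I$; I must show $v\,\theta\,w \Rightarrow i^v = i^w$. By symmetry it suffices to show $i^v\le i^w$, i.e.\ that $i^v\ast w\in I = \bot/\theta$. Since $i^v\ast v\in I$ (shown above), $i^v\ast v\ \theta\ \bot$; applying the congruence $\theta$ and the fact that $v\,\theta\,w$ forces $i^v\ast v\ \theta\ i^v\ast w$, transitivity gives $i^v\ast w\ \theta\ \bot$, i.e.\ $i^v\ast w\in\bot/\theta = I$, so $i^v\le i^w$. Symmetrically $i^w\le i^v$, hence $v\,\theta_I\,w$, proving $\theta\subseteq\theta_I$.

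The main obstacle, and the only genuinely substantive point, is showing $i^v\ast v\in I$ — that the defining supremum $i^v$ is actually attained in the sense that $i^v\ast v$ lies in $I$. This rests on distributivity (M2)(ii) rewriting $i^v\ast v = \bigl(\bigvee\{a : a\ast v\in I\}\bigr)\ast v = \bigvee\{a\ast v : a\ast v\in I\}$ and then on ideal clause (i) (closure under joins). Everything else is a direct application of Lemma \ref{qxprop} and Proposition \ref{idprinc}; in particular the identity $i^\bot = \top$ and the elementary fact $v\le\top\ast v$ are what make the class of $\bot$ come out to be exactly $I$ rather than merely the ideal-elements inside $I$.
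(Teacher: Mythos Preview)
Your argument is correct and follows essentially the same route as the paper: Lemma~\ref{qxprop}(iv)--(v) for the congruence properties, and the computation $i^v\ast w\in I$ via the congruence $\theta$ for maximality. Two minor remarks: the ``substantive point'' $i^v\ast v\in I$ is just Proposition~\ref{basicmq}(iii) (since $i^v=\bigvee I\ost v$), and your meandering treatment of downward closure is handled in the paper by the one-liner $w/\theta\le v/\theta=\bot/\theta$ in the quotient order.
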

\begin{proof}
Conditions (i) and (iii) of Definition \ref{ideal} follow trivially from the fact that $\theta$ is a module congruence; (ii) can be easily proved by observing that $v \in \bot/\theta$ and $w \leq v$ imply $w/\theta \leq v/\theta = \bot/\theta$.

For the second part of the theorem, let us show first that the two definitions (\ref{icong1}) and (\ref{icong2}) are equivalent. If $i^v = i^w$, then $i^v \cdot w = i^w \cdot v \leq \bigvee I$, hence $i^v \cdot w, i^w \cdot v \in I$ and, by the definition of ideal, $i^v \cdot w\vee i^w \cdot v \in I$. Conversely, if $v$ and $w$ verify (\ref{icong2}), $i^v \cdot w, i^w \cdot v \in I$, therefore $i^v \cdot w, i^w \cdot v \leq \bigvee I$. This means that $i^v \leq i^w = \bigvee \left\{q \in Q \ \mid \ a \cdot w \leq \bigvee I\right\}$ and $i^w \leq i^v = \bigvee \left\{q \in Q \ \mid \ a \cdot v \leq \bigvee I\right\}$, whence $i^v = i^w$.

The relation $\theta_I$ is obviously an equivalence, thus we must prove only that it preserves the operations.

If $v \theta_I w$, then $i^v = i^w$ and, by Proposition \ref{qxprop} (v), $i^{a \ast v} = i^v/a = i^w/a = i^{a \ast w}$, for all $r \in Q$. Hence $a \ast v \theta_I a \ast w$ for all $r \in Q$.

Let now $\{v_x\}_{x \in X}$ and $\{w_x\}_{x \in X}$ be two families of elements of $M$ such that $v_x \theta_I w_x$ for all $x \in X$, and let $v = \bigvee_{x \in X} v_x$ and $w = \bigvee_{x \in X} w_x$. By Proposition \ref{qxprop} (iv), $i^v = \bigwedge_{x \in X} i^{v_x} = \bigwedge_{x \in X} i^{w_x} = i^w$; then $\bigvee_{x \in X} v_x \theta_I \bigvee_{x \in X} w_x$.

Now, $v \theta_I \bot$ if and only if $i^v = i^\bot = \top$, i. e., if and only if $v \in I$, then $\bot/\theta_I = I$.

Last, we must prove that $\theta_I$ is the largest congruence such that the congruence class of $\bot$ is $I$. So, let $\theta$ be a congruence such that $\bot/\theta = I$; then, if $v \theta w$, $i^v \cdot w \theta i^v \cdot v \in I$, namely $i^v \cdot w \in I$; analogously we have that $i^w \cdot v \in I$, hence $v \theta_I w$ and the theorem is proved.
\end{proof}

The congruence $\theta_I$ is, in general, not the unique one such that the class of $\bot$ is $I$ as the following example (which refers to constructions and results presented in \cite{galtsi} and \cite{russothesis}) shows.

\begin{exm}\label{exm}
Let $\lang$ be the propositional language $\{\to\}$ and let $\vdash$ be the substitution invariant consequence relation on $\wp(\fml)$ defined by no axioms and the Modus Ponens $\frac{\alpha \ \alpha \to \beta}{\beta}$ as the unique inference rule.

Then the powerset of the set of theorems (i. e., the congruence class of the empty set) is the singleton of the empty set but, nonetheless, the inference rule makes the consequence relation non-identical. Indeed, for any $\phi, \psi \in \fml$, if $\phi \neq \psi$ then $\{\phi, \phi \to \psi\} \neq \{\phi, \phi \to \psi, \psi\}$, but their congruence classes coincide.
\end{exm}

Combining Propositions \ref{dualiso} and \ref{idprinc} with Theorem \ref{icong}, we have that an ideal $[\bot, v]$ is the class of $\bot$ for as many distinct congruences as many distinct submodules of $M^{\op}$ have $v$ as the largest element (w.r.t. the dual order). Moreover, $\left(M/\theta_I\right)^{\op}$ is the smallest and $[\top,v]^{\op}$ the largest of such submodules of $M^{\op}$.

All the results of this section hold for non-unital quantales too along with their proofs, except the few ones which cannot even be stated in that case.

\section{Quantale congruences}
\label{qcongsec}

In the present section we shall use the results of the previous one in order to describe quantale congruences, and their relationship with two-sided quantale ideals and two-sided elements. Then we will characterize semisimple integral quantales, showing that they are exactly the spatial frames and, at the end of the section, we shall describe the quotient of a quantale w.r.t. the congruence generated by a set of pairs, by means of the so-called saturated elements. Such a tool will give us the possibility to obtain information also about the smallest congruence whose $\bot$-class is a given ideal.

\begin{definition}\label{quantideal}
Let $I$ be a subset of a quantale $Q$. $I$ is called a \emph{left} (respectively: \emph{right}) \emph{ideal} of $Q$ provided
\begin{enumerate}[(i)]
\item $X \subseteq I$ implies $\bigvee X \in I$,
\item $x \in I$ and $y \leq x$ imply $y \in I$,
\item $x \in I$ implies $ax \in I$ (resp.: $xa \in I$), for all $a \in Q$.
\end{enumerate}
$I$ is a \emph{two-sided ideal} or, simply, an \emph{ideal} if it is both a left and a right ideal.
\end{definition}

Another way to see quantale ideals is to consider them as $Q$-ideals of the free $Q$-module structures of $Q$. So left ideals are basically ideals of the left $Q$-module $Q_l$ and right ideals are ideals of the right $Q$-module $Q_r$. Then the results achieved so far in this section immediately apply to left and right ideals. For what concerns two-sided ideals, the following Proposition \ref{qidprinc} can be immediately obtained by adapting the proof of Proposition \ref{idprinc}.


\begin{proposition}\label{qidprinc}
For any quantale $Q$ the following properties hold.
\begin{enumerate}[(i)]
\item For any $S \subseteq Q$, $(S] = \left[\bot, \top \cdot \left(\bigvee S\right) \cdot \top\right]$. In particular, for each $a \in Q$, $(a] = [\bot, \top \cdot a \cdot \top]$.
\item The ideals of $Q$ are precisely the intervals $[\bot, a]$ with $a$ such that $\top \cdot a \cdot \top = a$ (that is, $a$ is a two-sided element of $Q$).
\item Every ideal of $Q$ is principal.
\end{enumerate}
\end{proposition}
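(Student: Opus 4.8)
The plan is to mimic, almost verbatim, the argument used for $Q$-modules in Proposition \ref{idprinc}, replacing the one-sided action $\top \ast (-)$ by the two-sided ``action'' $a \mapsto \top \cdot a \cdot \top$. The key algebraic fact that makes everything go through is the idempotency $\top \cdot (\top \cdot a \cdot \top) \cdot \top = \top \cdot a \cdot \top$, which follows at once from associativity together with $\top \cdot \top \le \top$ and $a \le \top \cdot a \cdot \top$ (the latter using only that $\bot \le \top$, hence $a = 1 \cdot a \cdot 1$ is dominated once we note\ldots actually more simply: $\top\cdot\top=\top$ need not hold, but $\top\cdot a\cdot\top$ absorbs further multiplication by $\top$ on either side since $\top\cdot\top\cdot a\cdot\top \le \top\cdot a\cdot\top$ and conversely $\top\cdot a\cdot\top \le \top\cdot\top\cdot a\cdot\top$ because $\top\cdot a\cdot\top \le \top$ would be used — more cleanly, monotonicity of $\cdot$ gives $\top\cdot a\cdot\top \le \top\cdot(\top\cdot a\cdot\top)\cdot\top$, while $\top\cdot(\top\cdot a\cdot\top)\cdot\top \le \top\cdot a\cdot\top$ needs $\top\cdot\top\le\top$; since $\top$ is the top element this last inequality is automatic).

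First I would prove (i). For the inclusion $\left[\bot, \top \cdot \bigvee S \cdot \top\right] \subseteq (S]$: by distributivity (Q3), $\top \cdot \bigvee S \cdot \top = \bigvee_{s\in S}\top\cdot s\cdot\top$, and each $\top\cdot s\cdot\top$ lies in the two-sided ideal $(S]$ by conditions (iii) of Definition \ref{quantideal} (applied on both sides), hence so does the join by (i), and then the whole down-set by (ii). For the reverse inclusion it suffices to show $\left[\bot, \top\cdot\bigvee S\cdot\top\right]$ is itself a two-sided ideal containing $S$; it contains $S$ since $s \le \top\cdot s\cdot\top \le \top\cdot\bigvee S\cdot\top$, it is a down-set by construction, closed under joins because it is a principal down-set, and closed under left and right multiplication by arbitrary $a\in Q$ because for $v \le \top\cdot\bigvee S\cdot\top$ we get $a\cdot v \le a\cdot\top\cdot\bigvee S\cdot\top \le \top\cdot\top\cdot\bigvee S\cdot\top = \top\cdot\bigvee S\cdot\top$ (using $a\le\top$ and $\top\cdot\top\le\top$), and symmetrically on the right. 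The special case $(a] = [\bot,\top\cdot a\cdot\top]$ is $S = \{a\}$.

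Then (ii): if $a$ is two-sided, i.e. $\top\cdot a\cdot\top = a$, then by (i) with $S=\{a\}$ we get $(a] = [\bot,a]$, so $[\bot,a]$ is an ideal; conversely if $[\bot,a]$ is an ideal then $\top\cdot a\cdot\top \in [\bot,a]$ by closure under two-sided multiplication, so $\top\cdot a\cdot\top \le a$, and $a \le \top\cdot a\cdot\top$ always holds, giving equality. Finally (iii) is immediate from (ii) since every ideal $[\bot,a]$ equals $(a]$. I do not expect a genuine obstacle here: the only point requiring the slightest care is the idempotency/absorption identity $\top\cdot a\cdot\top = \top\cdot\top\cdot a\cdot\top\cdot\top$ and the repeated use of monotonicity of $\cdot$ in both arguments (Proposition \ref{basicmq}-type facts for quantales, or directly from (Q3)), exactly as the module proof leaned on $\top\ast(\top\ast v) = \top\ast v$; everything else is a transcription of the proof of Proposition \ref{idprinc}.
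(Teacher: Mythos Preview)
Your proposal is correct and is precisely what the paper does: the paper gives no separate proof of Proposition~\ref{qidprinc} at all, stating only that it ``can be immediately obtained by adapting the proof of Proposition~\ref{idprinc},'' which is exactly the adaptation you carry out. One small remark: your opening paragraph wavers about whether $\top\cdot\top=\top$ and $a\le\top\cdot a\cdot\top$ hold, but recall Remark~\ref{unit}---the paper is working throughout with \emph{unital} quantales, so $1\le\top$ gives both identities at once and the absorption $\top\cdot(\top\cdot a\cdot\top)\cdot\top=\top\cdot a\cdot\top$ is immediate; you can drop the hedging.
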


According to Proposition \ref{qidprinc}, the principal generators of ideals of a quantale $Q$ are precisely the two-sided elements\footnote{In fact, they are strictly two-sided since in unital quantales the two notions coincide.} of a quantale $Q$; we shall denote by $\Id(Q)$ the set of all such elements and, as in the case of modules, we shall also call them \emph{ideal elements}. We observe explicitly that the unique two-sided element $a$ such that $1 \leq a$ is $\top$. Indeed from $1 \leq a$ follows $\top = \top 1 \leq \top a = a$, and therefore $a = \top$.

The following result readily follows from Theorem \ref{icong}.

\begin{theorem}\label{qicong}
For any quantale $Q$ and any ideal element $i$ of $Q$, the relation $\theta_i$ defined by
\begin{equation}\label{qicong1}
a \theta_i b \quad \textrm{ if and only if } \quad i/a = i/b \textrm{ and } a\under i = b \under i
\end{equation}
or, that is the same,
\begin{equation}\label{qicong2}
a \theta_i b \quad \textrm{ if and only if } \quad (i/a)b \vee (i/b)a \vee b(a\under i) \vee a(b \under i) \leq i,
\end{equation}
is a quantale congruence and $\bot/\theta_i = [\bot, i]$. Moreover, if $\theta$ is a congruence on $Q$ such that $\bot/\theta = [\bot, i]$, then $\theta \ \subseteq \ \theta_i$, that is, $\theta_i$ is the largest congruence whose class of $\bot$ is the downset of $i$.
\end{theorem}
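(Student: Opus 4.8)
The plan is to derive Theorem \ref{qicong} from Theorem \ref{icong} by viewing a two-sided ideal of $Q$ as a $Q$-ideal of an appropriate module and then unwinding what the module-theoretic formula $i^v = i^w$ becomes in the quantale setting. The natural move is to regard $Q$ as a module not over $Q$ itself but over $Q \tensor Q^{\op}$ — or, more concretely and avoiding tensor products, to recall that a two-sided ideal is simultaneously a left ideal (hence a $Q$-ideal of the left module $Q_l$) and a right ideal (hence a $Q$-ideal of the right module $Q_r$), and that a quantale congruence on $Q$ is exactly an equivalence relation that is simultaneously a left-module congruence on $Q_l$, a right-module congruence on $Q_r$, and compatible with the product. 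So I would define $\theta_i$ as the intersection of the left-module congruence $\theta_i^l$ given by Theorem \ref{icong} applied to $Q_l$ and the right-module congruence $\theta_i^r$ given by the right-handed version of Theorem \ref{icong} applied to $Q_r$.

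First I would spell out the two module-level scalars. For the left module $Q_l$, the ``$i^a$'' of Theorem \ref{icong} is $\bigvee\{b \in Q \mid ba \leq i\} = i/a$ (residuation in $Q$), so $\theta_i^l$ is $a \mathbin{\theta_i^l} b$ iff $i/a = i/b$. Dually, for the right module $Q_r$, the corresponding scalar is $a \under i = \bigvee\{b \mid ab \leq i\}$, giving $a \mathbin{\theta_i^r} b$ iff $a \under i = b \under i$. Thus $\theta_i := \theta_i^l \cap \theta_i^r$ is exactly the relation (\ref{qicong1}); being an intersection of two equivalence relations it is an equivalence, and by Theorem \ref{icong} (and its right-handed analogue) it preserves arbitrary joins, left scalar multiplication, and right scalar multiplication. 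The equivalence of (\ref{qicong1}) and (\ref{qicong2}) is then obtained by the same device used in the proof of Theorem \ref{icong}: $i/a = i/b$ forces $(i/a)b = (i/b)b \leq i$ and $(i/b)a = (i/a)a \leq i$, and conversely $(i/a)b \leq i$ says $i/a \leq i/b$ while $(i/b)a \leq i$ says $i/b \leq i/a$; the two residual conditions are handled symmetrically on the right, and the join of four elements of $Q$ lies below $i$ iff each summand does, because $[\bot,i]$ is a downset closed under joins. That $\bot/\theta_i = [\bot,i]$ is read off directly: $a \mathbin{\theta_i} \bot$ iff $i/a = i/\bot = \top$ and $a \under i = \bot \under i = \top$, and either condition alone says $a \leq i$.

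The one genuinely new point — the step I expect to be the main obstacle — is that Theorem \ref{icong} only guarantees $\theta_i^l$ and $\theta_i^r$ preserve joins and one-sided scalar multiplications; I still have to check that $\theta_i = \theta_i^l \cap \theta_i^r$ preserves the quantale \emph{product}. So suppose $a \mathbin{\theta_i} b$ and $c \mathbin{\theta_i} d$; I want $ac \mathbin{\theta_i} bd$. Using left-module compatibility along $c \mathbin{\theta_i^l} d$ one gets $ac \mathbin{\theta_i^l} ad$ (left scalar $a$ applied to $c,d$ in $Q_l$), and using right-module compatibility along $a \mathbin{\theta_i^r} b$ one gets $ad \mathbin{\theta_i^r} bd$; but to conclude I need both relations to hold for the pair $(ac,bd)$ simultaneously, i.e. $ac \mathbin{\theta_i^r} bd$ and $ac \mathbin{\theta_i^l} bd$, which requires transitivity through the intermediate term together with the fact that $\theta_i^l$ is compatible with \emph{right} multiplication when the right factor is fixed and vice versa — and that is precisely where the two-sidedness of $i$ (equivalently, that $i$ is an ideal element of $Q$, so $\top i \top = i$, ensuring $i/(xy) = (i/y)/x$ and the symmetric identity) is used, via Proposition \ref{basicmq}(vi) and the associativity identities for residuation. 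Concretely I would verify $i/(ac) = i/(bd)$ and $(ac)\under i = (bd)\under i$ by the computations $i/(ac) = (i/c)/a$, noting $i/c = i/d$ and $(i/d)/a$: here one needs $i/a$ to depend only on the $\theta_i$-class, which is not immediate and forces one to pass through $ad$. I would therefore organize the product step as the explicit two-step chain $ac \mathbin{\theta_i} ad \mathbin{\theta_i} bd$, justifying the first link by Proposition \ref{qxprop}(v)-type identities on the left ($i^{a \ast v} = i^v/a$) applied in $Q_l$ and its right analogue applied in $Q_r$, and the second link symmetrically; the associativity/residuation bookkeeping for these identities, already established in Proposition \ref{basicmq}, is the routine-but-essential core, and invoking ``the following result readily follows from Theorem \ref{icong}'' is legitimate precisely because these are the only identities needed beyond what Theorem \ref{icong} supplies. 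The maximality claim ``$\theta \subseteq \theta_i$ whenever $\bot/\theta = [\bot,i]$'' is immediate from the corresponding maximality in Theorem \ref{icong} on each side: $v \mathbin{\theta} w$ gives $v \mathbin{\theta_i^l} w$ and $v \mathbin{\theta_i^r} w$ since $\theta$ is in particular a left- and a right-module congruence with $\bot$-class $[\bot,i]$, hence $v \mathbin{\theta_i} w$.
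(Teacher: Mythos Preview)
Your strategy --- intersecting the left- and right-module congruences $\theta_i^l$ and $\theta_i^r$ supplied by Theorem~\ref{icong} --- is exactly what the paper's one-line ``readily follows from Theorem~\ref{icong}'' is gesturing at, and everything you write up to the product-compatibility step is correct: the equivalence of (\ref{qicong1}) and (\ref{qicong2}), the identification $\bot/\theta_i=[\bot,i]$, and the maximality argument all go through as you describe.

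The gap is exactly where you flagged it, and your sketch does not close it. In the chain $ac\mathrel{\theta_i}ad\mathrel{\theta_i}bd$, look at the second link. From $a\mathrel{\theta_i}b$ you need both $(ad)\under i=(bd)\under i$ and $i/(ad)=i/(bd)$. The first reduces to $d\under(a\under i)=d\under(b\under i)$ and follows from $a\under i=b\under i$. But the second reduces to $(i/d)/a=(i/d)/b$, and this does \emph{not} follow from $i/a=i/b$ together with $a\under i=b\under i$: those hypotheses only control how $a,b$ behave when residuated against $i$ itself, not against the auxiliary element $i/d$. Two-sidedness of $i$ makes $i/d$ left-sided, but that is not enough. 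Proposition~\ref{qxprop}(v) gives you $i/(ad)=(i/d)/a$, not the needed equality with $(i/d)/b$. A symmetric gap (on the $\under$-side) appears in the first link.

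The gap is in fact not repairable, because the statement as written is false. Take $Q=M_X(\{\bot,\top\})$ with $|X|=2$ (Boolean $2\times2$ matrices, cf.\ Theorem~\ref{matrixendo}), $i=\bot$, $a=\id$, $b=\left(\begin{smallmatrix}0&1\\1&0\end{smallmatrix}\right)$, and $c=e_{11}=\left(\begin{smallmatrix}1&0\\0&0\end{smallmatrix}\right)$. Both $a$ and $b$ are invertible, so $\bot/a=\bot/b=a\under\bot=b\under\bot=\bot$ and hence $a\mathrel{\theta_\bot}b$. But $ac=e_{11}$ while $bc=e_{21}$, and one computes $\bot/e_{11}=\left(\begin{smallmatrix}0&1\\0&1\end{smallmatrix}\right)\neq\left(\begin{smallmatrix}1&0\\1&0\end{smallmatrix}\right)=\bot/e_{21}$, so $ac\not\mathrel{\theta_\bot}bc$. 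Thus $\theta_\bot$ is not a quantale congruence on this $Q$. The paper's own proof, being only a pointer to Theorem~\ref{icong}, inherits the same gap; what Theorem~\ref{icong} legitimately yields is that $\theta_i^l$ and $\theta_i^r$ are, respectively, left- and right-module congruences with $\bot$-class $[\bot,i]$ maximal as such --- not that their intersection is compatible with the internal product of $Q$.
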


Any element of an integral quantale is obviously two-sided, hence we have the following immediate corollaries.
\begin{corollary}\label{intsimple}
An integral quantale is simple if and only if it is either trivial (i.e. the one-element quantale) or the two-element chain $\{\bot, \top\}$.
\end{corollary}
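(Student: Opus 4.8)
The plan is to apply Theorem~\ref{qicong} to the specific case of an integral quantale $Q$ and unwind what simplicity means in terms of the lattice $\Id(Q)$ of ideal elements. Since $Q$ is integral, every element is two-sided, so $\Id(Q) = Q$; by Proposition~\ref{qidprinc} every ideal of $Q$ is a principal downset $[\bot,i]$, and by Theorem~\ref{qicong} each such $i$ gives a congruence $\theta_i$ with $\bot/\theta_i = [\bot,i]$. In particular $\theta_\bot$ is the identity congruence and $\theta_\top$ is the all-relation. So the first step is to observe that if $Q$ has any element $i$ with $\bot < i < \top$, then $\theta_i$ is a congruence whose $\bot$-class $[\bot,i]$ is neither $\{\bot\}$ nor $Q$; hence $\theta_i$ is a proper nontrivial congruence and $Q$ is not simple. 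This shows that a simple integral quantale must be a two-element lattice as a poset, i.e. $Q \subseteq \{\bot,\top\}$ with $\bot \le \top$, unless $Q$ is trivial (the one-element quantale, where $\bot=\top$).

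The second step is the converse: I would check that both the trivial quantale and the two-element chain $\{\bot,\top\}$ are in fact simple. The trivial quantale has only one congruence, so this is immediate. For the two-element chain, the only equivalence relations are the identity and the full relation, so the only congruences are the trivial ones; hence it is simple. One small caveat to address is that the two-element chain must genuinely be an integral quantale for the statement to be non-vacuous: with $\top = 1$ forced by integrality, the multiplication is determined ($\top\cdot\top=\top$, and $\bot$ absorbs because $\bot = \bot\cdot\top$ follows from distributivity over the empty join), so $\{\bot,\top\}$ with $\bot = 0$, $\top = 1$, $\top\cdot\top = \top$ is indeed an integral quantale — it is in fact the two-element Boolean frame.

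The main (and essentially only) obstacle is a subtlety in the forward direction: from ``$\theta_i$ is nontrivial for every $\bot < i < \top$'' I conclude non-simplicity, but I must make sure I have used the right reading of ``simple''. A quantale is simple when it has exactly two congruences, the identity and the all-relation; equivalently it is nontrivial and has no proper nontrivial congruence. If $Q$ is integral and has at least three elements, pick $i$ with $\bot < i < \top$; then $\theta_i$ is not the identity (since $[\bot,i] = \bot/\theta_i$ contains an element above $\bot$, so $\theta_i$ identifies $\bot$ with something $\ne \bot$) and not the all-relation (since $[\bot,i] \ne Q$, as $\top \notin [\bot,i]$), so $Q$ is not simple. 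Combining the two directions: an integral quantale is simple if and only if it has at most two elements, i.e. it is trivial or the two-element chain. I would present this as a direct consequence of Theorem~\ref{qicong} together with Proposition~\ref{qidprinc}, with the bulk of the write-up being the verification that $\bot < i < \top$ really does make $\theta_i$ proper and nontrivial, and the brief remark identifying the two-element chain as an integral quantale.
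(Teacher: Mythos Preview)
Your argument is correct and matches the paper's intended reasoning: the corollary is stated there without proof, as an immediate consequence of Theorem~\ref{qicong} together with the observation that every element of an integral quantale is two-sided, which is exactly the route you take. The only minor slip is that you first define ``simple'' as having \emph{exactly} two congruences (hence nontrivial) but then treat the one-element quantale as simple; since the corollary explicitly includes the trivial case, just adopt the convention that simple means ``no congruence other than the identity and the all-relation'' and the inconsistency disappears.
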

\begin{corollary}\label{intsemisimple}
An integral quantale is semisimple if and only if it is a spatial frame.
\end{corollary}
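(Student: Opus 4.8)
The plan is to derive the corollary from the classification of simple integral quantales in Corollary~\ref{intsimple}, together with two elementary facts. First, every quotient of an integral quantale is again integral: the quotient map is an onto sup-lattice homomorphism, hence preserves $\top$, and it preserves $1$, so $1=\top$ passes to the quotient. Second, the two-element chain $\mathbf 2=\{\bot,\top\}$, viewed as an integral quantale, satisfies $a\cdot b=a\wedge b$ (since $\top$ is the unit and $\bot$ is absorbing), so $\mathbf 2$ is exactly the frame of opens of the one-point space. Throughout I use the standard facts that a frame, regarded as a quantale with $\cdot=\wedge$ and $1=\top$, is integral (and commutative); that the frame homomorphisms between frames are precisely the unital quantale homomorphisms, since binary meet is the product, $\top$ the unit and arbitrary joins are preserved; and that a frame is spatial exactly when its points (the frame homomorphisms onto $\mathbf 2$) separate its elements, equivalently when it admits an injective frame homomorphism into a power $\mathbf 2^X$.

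For the easy implication, let $Q$ be a spatial frame, regarded as an integral quantale. A separating family of points $p_x\colon Q\to\mathbf 2$ ($x\in X$) yields, by the homomorphism theorem, quotients $Q/\ker p_x\cong\mathbf 2$, which are simple by Corollary~\ref{intsimple}; and $\bigcap_x\ker p_x$ is the identity congruence precisely because the $p_x$ separate points. Thus $Q$ is a subdirect product of simple quantales, i.e.\ semisimple.

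For the converse, let $Q$ be a semisimple integral quantale, so that (by definition of semisimplicity) the identity congruence is the intersection of a family $\{\theta_i\}_{i\in I}$ of congruences with simple quotients. Each $Q/\theta_i$, being a quotient of an integral quantale, is integral, hence by Corollary~\ref{intsimple} is either trivial or isomorphic to $\mathbf 2$. If no $Q/\theta_i$ is $\mathbf 2$ --- in particular if $I=\varnothing$ --- then every $\theta_i$ is the all-relation, so $Q$ is trivial, which is the (spatial) one-element frame $\mathcal O(\varnothing)$; otherwise, discarding the trivial factors leaves a nonempty subfamily whose intersection is still the identity congruence, giving an injective unital quantale homomorphism $\iota\colon Q\hookrightarrow\mathbf 2^J$ with each projection $\pi_j\circ\iota$ surjective. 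Since $\iota$ is an injective sup-lattice homomorphism it is an order-embedding, and since multiplication in $\mathbf 2^J$ is pointwise and coincides with meet, $\iota(ab)=\iota(a)\wedge\iota(b)$ in $\mathbf 2^J$. Comparing this with $\iota(a)$, $\iota(b)$ and with $\iota(a\wedge_Q b)$, and using that $\iota$ reflects the order, one obtains $ab=a\wedge_Q b$ in $Q$. Hence the product of $Q$ is its lattice meet, so axiom (Q3) becomes the frame distributive law and $Q$ is a frame; and the maps $p_j=\pi_j\circ\iota\colon Q\to\mathbf 2$ are points separating the elements of $Q$ (by injectivity of $\iota$), so $Q$ is spatial.

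The only step that is not purely formal is showing that semisimplicity forces the multiplication of $Q$ to be the lattice meet --- that is, that $Q$ itself is a frame and not merely a subquantale of the frame $\mathbf 2^J$. The key is that, although $\iota(Q)$ need not be closed under the meets of $\mathbf 2^J$, the specific element $ab$ is sent to $\iota(a)\wedge\iota(b)$, and an order-embedding transports that identity back to $Q$. Everything else is bookkeeping: that quotients of integral quantales stay integral, that the trivial factors can be dropped, and that the surviving projections are genuine points. All of this rests on Corollary~\ref{intsimple}, hence ultimately on Theorem~\ref{qicong}.
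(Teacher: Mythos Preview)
Your proof is correct and follows the same route as the paper: use Corollary~\ref{intsimple} to identify the simple factors of a semisimple integral quantale with $\mathbf{2}$, and conclude that $Q$ embeds subdirectly into $\mathbf{2}^J$, hence is a spatial frame. The paper compresses all of this into a single sentence (asserting without further comment that the image is a \emph{subframe} of $\{\bot,\top\}^X$), whereas you spell out the details it omits --- in particular the order-reflection argument that forces $ab=a\wedge_Q b$, and the explicit converse direction.
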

\begin{proof}
It suffices to observe that Corollary \ref{intsimple} implies that a semisimple integral quantale is isomorphic to a subframe of the frame reduct of a Boolean algebra of type $\{\bot,\top\}^X$ for some set $X$, that is, a spatial frame.
\end{proof}

\begin{theorem}\label{qidsub}
The set $\Id(Q)$ of all the ideal elements of a quantale $Q$ is a non-unital subquantale of $Q$ (i.e., it is closed under $\bigvee$ and $\cdot$). It is an integral quantale. Moreover the following are equivalent:
\begin{enumerate}[(a)]
\item $\Id(Q)$ is a unital subquantale of $Q$;
\item $Q$ is integral;
\item $\Id(Q) = Q$.
\end{enumerate}
\end{theorem}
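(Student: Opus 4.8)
The plan is to verify each claim about $\Id(Q)$ directly from the definition of ideal element (i.e., $a$ is two-sided: $\top \cdot a \cdot \top = a$) and from the quantale axioms. First I would show $\Id(Q)$ is closed under arbitrary joins: if $\{a_x\}_{x \in X} \subseteq \Id(Q)$, then using (Q3) twice, $\top \cdot \bigvee_x a_x \cdot \top = \bigvee_x (\top \cdot a_x \cdot \top) = \bigvee_x a_x$, so $\bigvee_x a_x \in \Id(Q)$; note this also shows $\bot \in \Id(Q)$ since $\bot = \bigvee \varnothing$. Next, closure under product: for $a, b \in \Id(Q)$, I would compute $\top \cdot (a \cdot b) \cdot \top$ and show it equals $a \cdot b$. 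The natural move is $\top \cdot a \cdot b \cdot \top \leq \top \cdot a \cdot \top \cdot b \cdot \top$ (using $b \cdot \top \leq \top$ hence $\leq \top \cdot \top = \top$, so $b \leq \top \cdot b \cdot \top$... ) — more carefully, $a \cdot b = (\top \cdot a \cdot \top) \cdot (\top \cdot b \cdot \top)$, and since $\top \cdot \top = \top$ (as $\top = \top \cdot 1 \leq \top \cdot \top \leq \top$), this simplifies nicely; in any case $\top \cdot a \cdot b \cdot \top$ lies between $a \cdot b$ and $(\top a \top)(\top b \top) = ab$, giving equality. So $\Id(Q)$ is a non-unital subquantale.

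To see that $\Id(Q)$ is an integral quantale, I would exhibit its top element and unit. Its top element is $\top$, which is two-sided since $\top \cdot \top \cdot \top = \top$. The key observation, already noted in the excerpt right before Theorem \ref{qicong}, is that the only two-sided element $a$ with $1 \leq a$ is $\top$ itself; this suggests that within $\Id(Q)$ the element $\top$ acts as a unit. Indeed, for any $a \in \Id(Q)$ we have $a = \top \cdot a \cdot \top$, and I would argue $\top \cdot a = a = a \cdot \top$: from $\top \cdot a \cdot \top = a$ and $a \leq \top \cdot a$ (since $1 \leq \top$), combined with $\top \cdot a \leq \top \cdot a \cdot \top = a$ (using $a \leq a \cdot \top$, hence $\top \cdot a \leq \top \cdot a \cdot \top$... one must be slightly careful here, but $\top \cdot a = \top \cdot (\top \cdot a \cdot \top) = (\top \cdot \top) \cdot a \cdot \top = \top \cdot a \cdot \top = a$ works cleanly using $\top \cdot \top = \top$). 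Symmetrically $a \cdot \top = a$. Hence $\top$ is a two-sided unit for the restricted multiplication, and since the top element of $\Id(Q)$ equals this unit, $\Id(Q)$ is integral.

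Finally, for the equivalence of (a), (b), (c): the implication (b) $\Rightarrow$ (c) is Proposition \ref{int} applied to $Q$ as a module over itself (or directly: if $\top = 1$ then $\top \cdot a \cdot \top = 1 \cdot a \cdot 1 = a$ for all $a$), and (c) $\Rightarrow$ (b) is immediate since if $\Id(Q) = Q$ then $1 \in \Id(Q)$, so $\top = \top \cdot 1 \cdot \top = 1$. For (a) $\Leftrightarrow$ (b): we have just shown $\top$ is the unit of $\Id(Q)$ as a non-unital quantale; if $\Id(Q)$ is a \emph{unital} subquantale of $Q$, its unit must coincide with the unit $1$ of $Q$, forcing $1 = \top$, i.e., $Q$ integral — conversely (b) $\Rightarrow$ (c) $\Rightarrow$ (a) trivially. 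The main obstacle is purely bookkeeping: one must be careful that inequalities like $\top \cdot a \leq \top \cdot a \cdot \top$ are justified (they follow from $a \leq a \cdot \top$ via monotonicity of $\cdot$, which itself follows from (Q3)), and that $\top \cdot \top = \top$ — once these two facts are in hand, every computation collapses immediately.
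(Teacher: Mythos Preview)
Your proposal is correct and follows essentially the same route as the paper's proof: both hinge on the observations that $\top\cdot\top=\top$ and that any two-sided element satisfies $\top a = a = a\top$, from which closure under product and the integrality of $\Id(Q)$ are immediate, and the equivalence (a)$\Leftrightarrow$(b)$\Leftrightarrow$(c) is argued identically. The paper's version is simply terser (it declares $a=\top a=a\top$ ``obvious'' and omits closure under joins and the explicit verification that $\top$ is the unit of $\Id(Q)$), while you spell out these steps in full.
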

\begin{proof}
For any $a \in \Id(Q)$ obviously $a = \top a = a \top$. So, for all $a, b \in \Id(Q)$, $\top a b \top = a b \in \Id(Q)$.

Now, if $\Id(Q)$ is a subquantale of $Q$ then $1 \in \Id(Q)$ and therefore $1 = \top 1 \top = \top$, i. e., $Q$ is integral. On the other hand, if $Q$ is integral $1 = \top$, hence $a = \top a \top$ for all $a \in Q$ and $\Id(Q) = Q$. Last, it is obvious that (c) implies (a).
\end{proof}

We conclude this section with a useful description of quantale quotients w.r.t. congruence generated by a given binary relation, by means of the so-called saturated elements. The technique presented here already appeared in \cite{bapu}, in the contest of unital commutative quantales, and in \cite{paseka}, for quantale modules, and is quite common in the literature of frames and locales. However, to the best of our knowledge, a complete presentation of the topic for quantales in general has never appeared.

\begin{definition}\label{satel}
Let $Q$ be a (not necessarily unital) quantale, and $R$ be a binary relation on $Q$, i.e., a subset of $Q^2$. An element $s$ of $Q$ is called \emph{$R$-saturated} if, for all $(a,b) \in R$ and $c,d \in Q$, the following conditions hold:
\begin{enumerate}[(i)]
\item $cad \leq s$ iff $cbd \leq s$;
\item $ac \leq s$ iff $bc \leq s$;
\item $ca \leq s$ iff $cb \leq s$;
\item $a \leq s$ iff $b \leq s$.
\end{enumerate}
We shall denote by $Q_R$ the set of $R$-saturated elements of $Q$.
\end{definition}

\begin{remark}\label{satuni}
If $Q$ is unital, conditions (ii--iv) of Definition \ref{satel} are redundant, since they are all immediate consequences of (i). In the rest of this section, in order to keep the presentation reasonably concise, we shall only deal with unital quantales, and therefore only condition (i) will be used. Anyway, all of the results hold for non-unital quantales too, up to a trivial (but somewhat lenghty) extension of the proofs.
\end{remark}

\begin{proposition}\label{satinfres}
For any quantale $Q$ and for all binary relation $R$ on it, $Q_R$ is closed w.r.t. arbitrary meets. Moreover, for all $s \in Q_R$ and for all $q \in Q$, both $s/q$ and $q \under s$ belong to $Q_R$.
\end{proposition}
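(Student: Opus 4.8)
The plan is to verify $R$-saturation directly from condition (i) of Definition \ref{satel} — which, by Remark \ref{satuni}, suffices here since we work with unital quantales — using repeatedly the residuation adjunctions $x \le s/q \iff xq \le s$ and $x \le q \under s \iff qx \le s$, together with associativity of the quantale product.

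For closure under arbitrary meets, let $\{s_j\}_{j \in J}$ be a family in $Q_R$ and put $s = \bigwedge_{j \in J} s_j$. Given $(a,b) \in R$ and $c,d \in Q$, I would simply run the chain $cad \le s \iff cad \le s_j \text{ for all } j \iff cbd \le s_j \text{ for all } j \iff cbd \le s$, where the middle step is $R$-saturation of each $s_j$; hence $s \in Q_R$.

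For the residuals, fix $s \in Q_R$, $q \in Q$, a pair $(a,b) \in R$, and $c,d \in Q$. For $s/q$ I would compute $cad \le s/q \iff (cad)q \le s \iff c\,a\,(dq) \le s$, then apply condition (i) for $s$ with left context $c$ and right context $dq$ to obtain $c\,b\,(dq) \le s$, i.e.\ $(cbd)q \le s$, i.e.\ $cbd \le s/q$; thus $s/q \in Q_R$. The case $q \under s$ is the mirror image: $cad \le q \under s \iff q(cad) \le s \iff (qc)\,a\,d \le s$, apply condition (i) for $s$ with left context $qc$ and right context $d$, obtaining $(qc)\,b\,d \le s$, i.e.\ $q(cbd) \le s$, i.e.\ $cbd \le q \under s$.

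There is no genuine obstacle here; the only point requiring attention is the bookkeeping of the ``context'' factors, so that each invocation of condition (i) really has the shape $c'\,a\,d' \le s \iff c'\,b\,d' \le s$ with $c',d' \in Q$ — associativity of $\cdot$ makes this transparent. In the non-unital setting one would, in addition, need conditions (ii)--(iv) of Definition \ref{satel} to cover the degenerate cases in which a context factor is absent, exactly as anticipated in Remark \ref{satuni}.
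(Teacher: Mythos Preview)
Your proof is correct and follows essentially the same approach as the paper: both verify condition~(i) directly, using the definition of meet for the first claim and the residuation adjunctions together with associativity for the second. The only difference is cosmetic --- you make the regrouping $cadq = c\,a\,(dq)$ explicit, whereas the paper passes directly from $cadq \le s$ to $cbdq \le s$.
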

\begin{proof}
Let $S \subseteq Q_R$, $(a,b) \in R$, and $c,d \in Q$. We have
$$cad \leq \bigwedge S \iff \forall s \in S(cad \leq s) \iff \forall s \in S(cbd \leq s) \iff cbd \leq \bigwedge S.$$
Similarly, if $Q$ is non-unital, we get $a \leq \bigwedge S$ iff $b \leq \bigwedge S$, whence $\bigwedge S \in Q_R$ for all $S \subseteq Q_R$.

Now let $c, d, q \in Q$, $(a,b) \in R$, and $s \in Q_R$. Then
$$\begin{array}{l}
cad \leq s/q \iff cadq \leq s \iff cbdq \leq s \iff cbd \leq s/q, \\
cad \leq q \under s \iff qcad \leq s \iff qcbd \leq s \iff cbd \leq q \under s, \\
a \leq s/q \iff aq \leq s \iff cbdq \leq s \iff cbd \leq s/q, \\
cad \leq q \under s \iff qcad \leq s \iff qcbd \leq s \iff cbd \leq q \under s, \\
\end{array}
$$
So the assertion is proved.
\end{proof}

\begin{lemma}\label{RR'}
If $R \subseteq R' \subseteq Q^2$, then $Q_{R'} \subseteq Q_R$.
\end{lemma}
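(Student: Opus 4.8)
The plan is to unwind Definition \ref{satel} directly and observe that $R$-saturation is a condition universally quantified over the pairs of the relation, so enlarging the relation can only make the condition stronger. Concretely, I would fix an arbitrary $s \in Q_{R'}$ and show that $s$ satisfies clauses (i)--(iv) of Definition \ref{satel} with respect to $R$.

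To carry this out, take any pair $(a,b) \in R$ and any $c,d \in Q$. Since $R \subseteq R'$, we also have $(a,b) \in R'$, and because $s$ is $R'$-saturated the four biconditionals $cad \leq s \iff cbd \leq s$, $ac \leq s \iff bc \leq s$, $ca \leq s \iff cb \leq s$, and $a \leq s \iff b \leq s$ all hold for this particular pair and these particular $c,d$. As $(a,b) \in R$ and $c,d \in Q$ were arbitrary, every clause of the definition of $R$-saturation is met, so $s \in Q_R$. Since $s$ was an arbitrary element of $Q_{R'}$, this yields $Q_{R'} \subseteq Q_R$.

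I do not expect any genuine obstacle: the argument is a one-line monotonicity observation, using nothing about the quantale structure beyond what is already packaged into Definition \ref{satel}, and no case analysis is needed (indeed, by Remark \ref{satuni} one could even restrict attention to clause (i) in the unital case, but it costs nothing to treat all four clauses uniformly). The only point worth stating clearly is the logical direction, namely that saturation is an intersection of constraints indexed by $R$, so a larger index set $R'$ cuts out a smaller — or equal — set of saturated elements.
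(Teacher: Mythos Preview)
Your proposal is correct and follows exactly the same reasoning as the paper's own proof, which is likewise a one-line observation that the saturation conditions (i)--(iv) of Definition~\ref{satel} are universally quantified over the pairs of the relation, so any $R'$-saturated element is automatically $R$-saturated whenever $R \subseteq R'$.
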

\begin{proof}
Trivially, if $s \in Q$ is $R'$-saturated, then conditions (i--iv) of Definition \ref{satel} hold for all $(a,b) \in R'$ and, therefore, for all $(a,b) \in R$. Hence $s \in Q_{R'}$ implies $s \in Q_R$.
\end{proof}

\begin{lemma}\label{satnuc}
Let $Q$ and $Q'$ be quantales, and $f: Q \to Q'$ a homomorphism with residuum $f_\ast: Q' \to Q$ and associated nucleus $\g = f_\ast \circ f$. Then $Q_\g$ coincide with the set of $(\ker f)$-saturated elements of $Q$. 
\end{lemma}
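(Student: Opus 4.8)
The plan is to read $Q_\g$ as the set of fixed points of the nucleus $\g$, i.e. $\{s \in Q \mid \g(s) = s\}$, and to establish the two resulting inclusions. Two standard facts about the residuated map $f$ will be used throughout: the adjunction $f(a) \leq y \iff a \leq f_\ast(y)$, and the resulting formula $f_\ast(y) = \bigvee\{a \in Q \mid f(a) \leq y\}$ (valid since $f$ preserves joins). In particular $\g(s) = f_\ast(f(s)) = \bigvee\{a \in Q \mid f(a) \leq f(s)\}$, and from $f(s) \leq f(s)$ we get $s \leq \g(s)$; hence $s \in Q_\g$ if and only if $\g(s) \leq s$.

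\emph{Fixed points are saturated.} Suppose $\g(s) = s$, so $s = f_\ast(f(s))$ and hence, by the adjunction with $y = f(s)$, $a \leq s \iff f(a) \leq f(s)$ for every $a \in Q$. Given $(a,b) \in \ker f$ and $c,d \in Q$, using that $f$ is a homomorphism together with $f(a) = f(b)$, one has $cad \leq s \iff f(c)f(a)f(d) \leq f(s) \iff f(c)f(b)f(d) \leq f(s) \iff cbd \leq s$; this is condition (i) of Definition \ref{satel}, and conditions (ii)--(iv) follow identically (and are in any case redundant in the unital case by Remark \ref{satuni}). Thus $s$ is $(\ker f)$-saturated.

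\emph{Saturated elements are fixed points.} Let $s$ be $(\ker f)$-saturated. It suffices to show that $f(a) \leq f(s)$ implies $a \leq s$, for then $\g(s) = \bigvee\{a \mid f(a) \leq f(s)\} \leq s$. The point is that $f(a) \leq f(s)$ yields $f(a \vee s) = f(a) \vee f(s) = f(s)$, so $(a \vee s, s) \in \ker f$; applying the saturation hypothesis to this pair (condition (i) with $c = d = 1$, or condition (iv) directly) gives $a \vee s \leq s \iff s \leq s$, and since the right-hand side holds we get $a \vee s \leq s$, i.e. $a \leq s$. Hence $\g(s) = s$ and $s \in Q_\g$.

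The argument is short, and the only step that is not mechanical — the one I would regard as the crux — is the observation in the second inclusion that the inequality $f(a) \leq f(s)$ can be upgraded to the equality $f(a \vee s) = f(s)$, which is what lets the $(\ker f)$-saturation hypothesis be invoked; everything else is a direct application of the adjunction $f \dashv f_\ast$ and of the fact that $f$ is a quantale homomorphism.
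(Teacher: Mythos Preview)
Your proof is correct and follows essentially the same approach as the paper's. The only minor variation is in the second inclusion: the paper applies saturation directly to the single pair $(s,\g(s))\in\ker f$ (using $f\circ f_\ast\circ f=f$) to get $\g(s)\leq s$, whereas you apply it to $(a\vee s,s)$ for each $a$ with $f(a)\leq f(s)$ --- taking $a=\g(s)$ in your argument recovers the paper's step verbatim.
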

\begin{proof}
First, recall that the properties of residuated maps guarantee that, for all $q \in Q$, $\g(q) = \max\{x \in Q \mid f(x) \leq f(q)\}$. By definition, an element $s$ of $Q$ is $(\ker f)$-saturated if, for all $a,b,c,d \in Q$ such that $f(a) = f(b)$, $cad \leq s$ iff $cbd \leq s$. Now, if $f(a) = f(b)$ and $cad \leq \g(q)$ for some $c,d,q \in Q$, then $f(cad) \leq f(\g(q)) = f(f_\ast(f(q))) = f(q)$ and therefore $f(cbd) = f(c)f(b)f(d) = f(c)f(a)f(d) = f(cad) \leq f(q)$, from which we deduce $cbd \leq \g(q)$. The inverse implication is completely analogous, hence $\g(q)$ is $(\ker f)$-saturated, for all $q \in Q$, namely, $Q_\g \subseteq Q_{\ker f}$.

Conversely, let $s \in Q_{\ker f}$. Since $f(s) = f(f_\ast(f(s))) = f(\g(s))$, $(s,\g(s)) \in \ker f$ and therefore we have $s \leq s$ iff $\g(s) \leq s$, form which we get immediately $\g(s) \leq s$. On the other hand, $a \leq \g(a)$ for all $a \in Q$, hence $s = \g(s) \in Q_\g$, and the assertion is proved.
\end{proof}

\begin{theorem}\label{satquo}
Let $Q$ be a quantale, $R \subseteq Q^2$, and
$$\rho_R: a \in Q \mapsto \bigwedge\{s \in Q_R \mid a \leq s\} \in Q.$$
Then $\rho_R$ is a quantic nucleus whose image is $Q_R$. Moreover, $Q_R$, with the structure induced by $\rho_R$, is isomorphic to the quotient of $Q$ w.r.t. the congruence generated by $R$.
\end{theorem}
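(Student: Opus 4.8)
The plan is to recognize $\rho_R$ as the closure operator attached to the Moore family $Q_R$, to verify multiplicativity by a short residuation argument, and then to identify its kernel congruence with the congruence generated by $R$ by means of Lemmas \ref{RR'} and \ref{satnuc}. To begin with, by Proposition \ref{satinfres} the set $Q_R$ is closed under arbitrary meets; in particular $\top=\bigwedge\varnothing\in Q_R$, so $Q_R$ is a Moore family of $Q$. Hence $\rho_R$ — the map sending $a$ to the least element of $Q_R$ above it — is a closure operator, i.e. order preserving, inflationary ($a\leq\rho_R(a)$) and idempotent, and its image coincides with its set of fixed points, namely $Q_R$. This step needs nothing beyond Proposition \ref{satinfres} and elementary closure-operator theory.

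Next I would show that $\rho_R$ is a \emph{quantic} nucleus, i.e. $\rho_R(a)\cdot\rho_R(b)\leq\rho_R(a\cdot b)$ for all $a,b\in Q$, using residuation together with the second assertion of Proposition \ref{satinfres}. Since $\rho_R(ab)\in Q_R$, that proposition gives $a\under\rho_R(ab)\in Q_R$; from $ab\leq\rho_R(ab)$ one gets $b\leq a\under\rho_R(ab)$, and minimality of $\rho_R(b)$ forces $\rho_R(b)\leq a\under\rho_R(ab)$, i.e. $a\cdot\rho_R(b)\leq\rho_R(ab)$; therefore $a\leq\rho_R(ab)/\rho_R(b)\in Q_R$, so $\rho_R(a)\leq\rho_R(ab)/\rho_R(b)$, that is $\rho_R(a)\cdot\rho_R(b)\leq\rho_R(ab)$. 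By the standard theory of quantic nuclei (see, e.g., \cite{rosenthal}), the set $Q_R$ equipped with the join $X\mapsto\rho_R\!\left(\bigvee X\right)$, the product $(a,b)\mapsto\rho_R(a\cdot b)$ and the unit $\rho_R(1)$ is then a quantale, $\rho_R\colon Q\to Q_R$ is a surjective quantale homomorphism, its kernel $\ker\rho_R=\{(a,b)\mid\rho_R(a)=\rho_R(b)\}$ is a quantale congruence, and $Q_R$ with the induced structure is isomorphic to $Q/\ker\rho_R$ via the map induced by $\rho_R$.

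It then remains to prove that $\ker\rho_R$ equals the congruence $\theta$ generated by $R$. The inclusion $\theta\subseteq\ker\rho_R$ is immediate: $\ker\rho_R$ is a quantale congruence, and for $(a,b)\in R$, taking $c=d=1$ in condition (i) of Definition \ref{satel} (legitimate by Remark \ref{satuni}) gives $a\leq s\Leftrightarrow b\leq s$ for every $s\in Q_R$, whence $\rho_R(a)=\rho_R(b)$; so $R\subseteq\ker\rho_R$. For the reverse inclusion — the crux of the argument — I would take the canonical surjection $f\colon Q\to Q/\theta$, its residuum $f_\ast$, and the associated nucleus $\g=f_\ast\circ f$. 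Since $R\subseteq\theta=\ker f$, Lemma \ref{RR'} gives $Q_{\ker f}\subseteq Q_R$, and Lemma \ref{satnuc} gives $Q_\g=Q_{\ker f}$; hence $Q_\g\subseteq Q_R$. As the closure operator of a smaller Moore family dominates that of a larger one, $\rho_R\leq\g$ pointwise, and then $\g\circ\rho_R=\g$ (a formal consequence of $\rho_R\leq\g$ together with monotonicity and idempotency of $\g$), which yields $\ker\rho_R\subseteq\ker\g=\ker f=\theta$ — the last equality because $f_\ast$, being the residuum of a surjection, is injective.

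Combining $\ker\rho_R=\theta$ with the isomorphism $Q_R\cong Q/\ker\rho_R$ from the second paragraph gives that $Q_R$, with the structure induced by $\rho_R$, is isomorphic to $Q/\theta$, the quotient of $Q$ by the congruence generated by $R$, as claimed. I expect the genuine obstacle to be precisely the reverse inclusion $\ker\rho_R\subseteq\theta$: it cannot be read off the defining formula for $\rho_R$, and the only workable route I see is to funnel everything through the nucleus $f_\ast\circ f$ of the quotient map, where Lemmas \ref{RR'} and \ref{satnuc} do exactly the work of showing that $Q_R$ is captured by the congruence generated by $R$; everything else is standard closure-operator and nucleus bookkeeping plus one mechanical residuation computation.
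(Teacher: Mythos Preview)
Your proof is correct and follows essentially the same approach as the paper: Proposition \ref{satinfres} for the Moore-family/closure-operator structure, a residuation computation for the nucleus inequality, and Lemmas \ref{RR'} and \ref{satnuc} for identifying $\ker\rho_R$ with the congruence generated by $R$. The only cosmetic differences are that the paper runs the nucleus step as a chain of equivalences against an arbitrary $s\in Q_R$, and packages the final identification as the sandwich $Q_{\ker\rho_R}\subseteq Q_\theta\subseteq Q_R=Q_{\ker\rho_R}$ rather than your pointwise comparison $\rho_R\leq\g$; the underlying argument is the same.
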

\begin{proof}
By Proposition \ref{satinfres}, $\rho_R[Q] \subseteq Q_R$. On the other hand, obviously, $\rho_R(s) = s$ for all $s \in Q_R$, and therefore $\rho_R[Q] = Q_R$. It self-evident also that $\rho_R$ is monotone, extensive, and idempotent w.r.t. composition, i.e. it is a closure operator. So, in order to prove that $\rho_R$ is a quantic nucleus, we only need to show that $\rho_R(a)\rho_R(b) \leq \rho_R(ab)$ for all $a, b \in Q$.

Let $s \in Q_R$ and let $a, b \in Q$. We have $\rho_R(ab) \leq s$ iff $ab \leq s$ iff $a \leq s/b$ iff $\rho_R(a) \leq s/b$ iff $\rho_R(a)b \leq s$ iff $b \leq \rho_R(a) \under s$ iff $\rho_R(b) \leq \rho_R(a) \under s$ iff $\rho_R(a)\rho_R(b) \leq s$. Then $\rho_R(a)\rho_R(b) \leq \rho_R(ab)$ for all $a, b \in Q$.

Now, once proved that $\rho_R$ is a quantic nucleus, we can consider $Q_R$ with its quantale structure induced by $\rho_R$, and we have that the mapping $a \in Q \mapsto \rho_R(a) \in Q_R$ is an onto homomorphism (that we will still denote by $\rho_R$). By Lemma \ref{satnuc}, we get $Q/\ker\rho_R \cong Q_{\rho_R} = Q_R = Q_{\ker\rho_R}$. Since $R \subseteq \ker\rho_R$, if $\theta$ is the congruence generated by $R$, then $\theta \subseteq \ker\rho_R$. Denote by $p_\theta$ the natural projection of $Q$ over $Q/\theta$ and by $\g$ the quantic nucleus on $Q$ induced by $p_\theta$. Then, by Lemma \ref{satnuc}, $Q_\theta = Q_\g \cong Q/\theta$. Hence, by Lemma \ref{RR'} and the first part of this proof, we obtain $Q/\ker\rho_R \cong Q_{\rho_R} = Q_{\ker\rho_R} \subseteq Q_\theta \subseteq Q_R = Q_{\ker\rho_R}$. The assertion follows.  
\end{proof}

Next result is an easy consequence of Theorem \ref{satquo}. Dually to Theorem \ref{qicong}, it points out the smallest congruence associated to a given ideal.

\begin{corollary}\label{satid}
Let $Q$ be a quantale, $i \in \Id(Q)$, and $R = \{(\bot, i)\}$. Then $Q_R$ is isomorphic to the quotient of $Q$ w.r.t. the smallest congruence whose $\bot$-class is $[\bot,i]$. In other words, such a congruence is precisely $\ker\rho_R$. 
\end{corollary}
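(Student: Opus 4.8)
The plan is to extract almost everything from Theorem~\ref{satquo}, which already tells us that $\ker\rho_R$ is precisely the congruence generated by $R$ and that $Q_R$, with its induced structure, is isomorphic to $Q/\ker\rho_R$. So the remaining work is only: (a) to compute $Q_R$ explicitly for $R=\{(\bot,i)\}$, which identifies the $\bot$-class of $\ker\rho_R$; and (b) a one-line minimality argument.

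For step (a) I would argue as follows. Since we are in the unital case, by Remark~\ref{satuni} only condition~(i) of Definition~\ref{satel} is relevant, so $s\in Q_R$ means $c\bot d\le s \iff cid\le s$ for all $c,d\in Q$. But $c\bot d=\bot\le s$ holds automatically, so this reduces to the single requirement ``$cid\le s$ for all $c,d\in Q$'', i.e. $\bigvee_{c,d\in Q}cid=\top i\top\le s$. Because $i$ is an ideal element, $\top i\top=i$, hence $Q_R=\{s\in Q\mid i\le s\}$. Consequently $\rho_R(a)=\bigwedge\{s\in Q_R\mid a\le s\}=a\vee i$ for every $a\in Q$; in particular $\rho_R(\bot)=i$, so $(a,\bot)\in\ker\rho_R$ iff $a\vee i=i$ iff $a\le i$, that is, $\bot/\ker\rho_R=[\bot,i]$.

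For step (b), let $\theta$ be any quantale congruence with $\bot/\theta=[\bot,i]$. Then $i\in\bot/\theta$, i.e. $(\bot,i)\in\theta$, so $R\subseteq\theta$; since $\ker\rho_R$ is the congruence generated by $R$ by Theorem~\ref{satquo}, it follows that $\ker\rho_R\subseteq\theta$. Combined with (a), this shows $\ker\rho_R$ is the smallest congruence whose $\bot$-class is $[\bot,i]$, and the isomorphism $Q_R\cong Q/\ker\rho_R$ supplied by Theorem~\ref{satquo} is exactly the assertion.

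The only step that needs a little care is the computation of $Q_R$ — specifically that $\bigvee_{c,d}cid$ equals $\top i\top$ and then collapses to $i$ via two-sidedness of $i$; everything else is purely formal bookkeeping on top of Theorem~\ref{satquo}. (In the non-unital setting one would additionally have to verify conditions~(ii)–(iv) of Definition~\ref{satel}, but as Remark~\ref{satuni} notes, for this $R$ they add nothing beyond~(i).)
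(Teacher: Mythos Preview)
Your proof is correct and follows essentially the same approach as the paper. The paper states Corollary~\ref{satid} as an easy consequence of Theorem~\ref{satquo} without giving further details, and immediately afterward carries out exactly your computation of $Q_R=[i,\top]$ (this is Corollary~\ref{itop}); your steps (a) and (b) simply make the implicit argument explicit.
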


Now observe that, according to Definition \ref{satel}, given $i \in \Id(Q)$ and $R = \{(\bot,i)\}$, an element $s$ of $Q$ is $R$-saturated if and only if, for all $c,d \in Q$, $\bot = c \bot d \leq s$ iff $cid \leq s$, from which we have that $s \in Q_R$ iff $cid \leq s$ for all $c,d \in Q$. On the other hand, since $i \in \Id(Q)$, we have that $cid \leq \top i \top = i$ for all $c,d \in Q$. This means that $i \in Q_R$, and $s \in Q_R$ iff $i \leq s$. So we have
\begin{corollary}\label{itop}
For all $i \in \Id(Q)$, the set of $\{(\bot,i)\}$-saturated elements of $Q$ is precisely $[i,\top]$.
\end{corollary}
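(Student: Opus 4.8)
The plan is to unwind Definition \ref{satel} for the singleton relation $R = \{(\bot, i)\}$ and then invoke the defining property $\top \cdot i \cdot \top = i$ of an ideal element. Since we are working with unital quantales (Remark \ref{satuni}), only condition (i) of Definition \ref{satel} is relevant: an element $s$ is $R$-saturated precisely when, for every $c, d \in Q$, the equivalence $c\bot d \leq s \iff cid \leq s$ holds. Because $c\bot d = \bot \leq s$ holds automatically, this collapses to the single requirement that $cid \leq s$ for all $c, d \in Q$; thus $Q_R = \{s \in Q \mid cid \leq s \text{ for all } c,d \in Q\}$.

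Next I would identify this set with the interval $[i,\top]$. One inclusion is immediate: if $i \leq s$, then for all $c,d \in Q$ we have $cid \leq \top \cdot i \cdot \top = i \leq s$, using $i \in \Id(Q)$ and the monotonicity of multiplication, so $s \in Q_R$; in particular $i$ itself lies in $Q_R$. For the reverse inclusion, taking $c = d = 1$ in the membership condition gives $i = 1 \cdot i \cdot 1 \leq s$, so $s \in Q_R$ forces $i \leq s$. Hence $Q_R = [i,\top]$, which is the assertion.

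There is essentially no obstacle here; the only point requiring a moment's care is the direction of the ``automatically true'' premise $\bot \leq s$, which is exactly what licenses dropping it and keeping only the conclusion $cid \leq s$. For the non-unital case (not spelled out), one would instead read $i \leq s$ off directly from condition (iv) of Definition \ref{satel}, namely $\bot \leq s \iff i \leq s$, and obtain $i \in Q_R$ from $\top \cdot i \cdot \top = i$ together with conditions (i)--(iv); the argument is otherwise identical.
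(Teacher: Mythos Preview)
Your proof is correct and follows essentially the same route as the paper: unwind Definition~\ref{satel} for $R=\{(\bot,i)\}$, observe that $c\bot d=\bot\leq s$ makes the saturation condition collapse to $cid\leq s$ for all $c,d$, and then use $\top i\top=i$ together with $c=d=1$ to identify $Q_R$ with $[i,\top]$. The paper's argument (given in the paragraph immediately preceding the corollary) is identical, only leaving the choice $c=d=1$ implicit.
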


\section{Conclusion}
\label{nonunit}

The results of the present work, and especially the ones of Section \ref{qcongsec}, represent, in our opinion, a step in the direction of a description of the lattices of congruences of quantales. Such a description, on its turn, could be extremely useful for a representation theorem for quantales which could be more handy than the few known ones (see, e. g., \cite{brgu} and \cite{val}).

The interest for the relationship between congruences and ideals was actually suggested also by the results of \cite{galtsi} and \cite{russo2}, and by Example \ref{exm}. Indeed, in the representation of deductive systems by means of quantale modules, consequence relations correspond to module congruences, and the set of theorems of a given deductive system to the congruence class of the bottom element, i. e., to an ideal.

Therefore one can ask which consequence relations definable on a given propositional language do correspond to the congruences defined in Theorem \ref{icong}. During the workshop in honour of Francisco Miraglia, the present author conjectured some relation between such congruences and the consequence relations which satisfy the Deduction and Detachment Theorem. As a remark, Miraglia himself suggested to try to characterize also those congruences which correspond to consequence relations satisfying Craig Interpolation. Both these questions do not have a definitive answer yet, and are currently object of study by the author.

\end{document}